\def\mapright#1#2#3{\smash{\mathop{\hbox to
#3{\rightarrowfill}}\limits^{#1}_{#2}}}
\def\mapleft#1#2#3{\smash{\mathop{\hbox to
#3{\leftarrowfill}}\limits^{#1}_{#2}}}
\def\mapright#1#2{\smash{\mathop{\hbox to 0.90cm{\rightarrowfill}}\limits^{#1}_{#2}}}
\def\mapleft#1#2{\smash{\mathop{\hbox to 0.90cm{\leftarrowfill}}\limits^{#1}_{#2}}}
\def\mapleftright#1#2{\smash{\mathop{\hbox to 0.80cm{\leftarrowfill \rightarrowfill}}\limits^{#1}_{#2}}}
\title{Framed link presentations of 3-manifolds by an ${O}(n^2)$ algorithm, {III}:
  geometric complex $\mathcal{H}_n^\star$ embedded into $\mathbb{R}^3$
\footnote{2010 Mathematics Subject Classification: 
57M25 and 57Q15 (primary), 57M27 and 57M15 (secondary)}} 
\author{Sóstenes Lins and Ricardo Machado}
\date{\today}
\begin{document}

\maketitle

\begin{abstract}
In this final part of a 3-part paper we introduce
the pair of ``wings'' of the abstract 
PL-colored complexes $\mathcal{H}_{m}^\star$, described in the second paper.
The wings, via a weight enhanced Tutte's barycentric 
embedding of a planar map, produce the unexpected reformutation of a 3-dimensionl problem
into a 2-dimensional one. The total number of edges in 
each one of the pair of final wings is less than $8n-5$. Tutte's method is applied $O(n)$ times
to each one of the 2 wings in the final pair to assure rectilinearity of the embeddings of the planar maps,
which include the final wings. A cone construction over the final wings provides a PL-complex $\mathcal{H}_1^\diamond$,
which contain the set of 0-simplices $\{a_1, a_2,\ldots,a_f\} \cup \{b_1, b_2,\ldots,b_g\}$ (as defined in the 
second part of the article) properly fixed in $\mathbb{R}^3$.
The other 0-simplices are obtained by bisections of segments linking previously defined points.
This implies that $\mathcal{H}_n$ is PL-embedded into $\mathbb{R}^3$. 
We then conclude the surgery description of
the 3-manifold induced by the gem with its resolution by 
defining some disjoint cylinders contained in  $\mathcal{H}_{n}^\star$,
directly from the hinges (dual of the twistors of the resolution), in a 1-1 correspondence. 
The medial curves of the cylinders define the link we seek. The framing of a medial curve is
the linking number of the boundary components of the corresponding cylinder.
The analysis of the whole proccess
shows that the memory and time requirement to complete the algorithm is
$O(n^2)$. Data for the Weber-Seifert 3-manifold, which answers Jeffrey Weeks's question
is given in the appendix. It consists of a link with 142 crossings but it admits simplifications.

\end{abstract}

\section{Wings as seeds for obtaining the dual PL-complex $\mathcal{H}_n^\star$}

This is the third of 3 closely related articles.
References for the companion papers are \cite{linsmachadoA2012} and \cite{linsmachadoB2012}.

Let $\Pi_\ell$  ($\Pi_r$) be the half plane limited by the $z$-axis 
which contains $a_1=z_0z_2/2$ $(b_1=z_1z_2/2)$.
The construction of the wings and nervures of the next section are exemplified in Figs. \ref{fig:winglist01} to \ref{fig:winglist11}.

\subsection{Wings: reformulating a difficult 3D-problem 
into an easy planar one}\label{subsec:wings}

At some point in our research it became evident that what was needed to obtain the PL-complex $\mathcal{H}_n^\star$ 
was a proper embedding into $\mathbb{R}^3$ of the set of 0-simplices $\{a_1, a_2,\ldots,a_f\} \cup \{b_1, b_2,\ldots,b_g\}$.
The other 0-simplices are obtained by bisections of segments linking previously defined points. It came as a surprise to discover
that this apparently difficult 3D problem was reformulated as a plane problem for which we had at hand an easy solution,
namely Tutte's barycentric method.

We construct a 
sequence pairs of plane
graphs $\{\{\mathcal{W}^\ell_1,\mathcal{W}^r_1\},
\{\mathcal{W}^\ell_2,\mathcal{W}^r_2\},\ldots,
\{\mathcal{W}^\ell_n,\mathcal{W}^r_n\} \}.$ The $m$-th such pair
constitutes the {\em left} and {\em right wings} \index{wings} of the colored $2$-complex
$\mathcal{H}_m^\star$. The left wings are embedded into $\Pi_\ell$ and
the right wings are embedded into $\Pi_r$. 
We define $\mathcal{W}^\ell_1$ as the set of $2n$ straight line segments
$a_1z_3^1, a_1z_3^2, \ldots, a_1z_3^{2n} \subseteq \Pi_\ell$,
and $\mathcal{W}^\ell_1$ as the set of $2n$ straight line segments
$b_1z_3^1, b_1z_3^2, \ldots, b_1z_3^{2n}  \subseteq \Pi_r$.
The {\em outer triangular region of the left wings} is the plane region
spanned by $a_1, z_3^1,z_3^{2n}$. The {\em outer triangular region of 
the right wings} is the plane region spanned by
$b_1, z_3^1, z_3^{2n}$. 
The passage from $\{\mathcal{W}^\ell_{m-1},\mathcal{W}^r_{m-1}\}$
to $\{\mathcal{W}^\ell_m,\mathcal{W}^r_m\}$ in the $(m-1)$-th $bp$-move,
which we call $wbp$-move, corresponds
in $(\mathcal{H}_{m-1}, \mathcal{H}_{m})$ to either a 
0-flip that subdivides a 13-gon into two (case
where the tail of the balloon's is of color 0)
or else to a 1-flip that subdivides a 03-gon into two (case
where the tail of the balloon's is of color 1).  
At this point we need to define a tree named \index{nervure of a wing} {\em nervure of a wing}.
This is done inductively.
The first ones, $\mathcal{W}^\ell_1$ and $\mathcal{W}^r_1$ have, respectively 
the degenerated trees formed by single points $a_1$ and $b_1$ as their nervures.
In the unique wing that changes with the $bp$-move, a vertex $x_\ell$ corresponding
to either a 13-gon or else to a 03-gon (in a way to be made clear in the poof of
Lemma \ref{lem:internalpointsbigons}).
The intersection of the balloon's head and 
its tail in $\mathcal{H}^\star_m$ is a PL1-face formed by two
simplices meeting at a point $a_p$ (if the tail of the balloon is of color 1)
or $b_q$, if it is of color 0. 
Along the process we define the following auxiliary functions, with arguments
 $1 \le m \le n-1$:  
$c(m),u(m),v(m),r(m),s(m),
\ell_a(m), \ell_b(m), t_a(m), t_b(m).$
The color
of the $m$-th balloon's tail is denoted by $c(m) \in \{0,1\}$.
Let $u(m), v(m)$ be the odd and even indices of the $m$-th 
balloon's head $\nabla_{u(m)} \cup \nabla_{v(m)}$. 
Let $r(m), s(m)$ be the odd and even indices of the $m$-th 
balloon's tail given by the PL2$_{c(m)}$-face $\subseteq 
\nabla_{r(m)} \cap \nabla_{s(m)}$.
Positive integers $\ell_a(m)$ and $\ell_b(m)$ are the last $a$- and $b$-indices
in left $m$-th wing and right $m$-th wing, respectively.
Indices $p$ or $q$ in the $m$-th $bp$-move satisfy $p=t_a(m)$ or $q=t_b(m)$.
In the passage $\mathcal{H}_m^\star$ to $\mathcal{H}_{m+1}^\ast$ either 
vertex $a_p$ is replaced by $a_{p'},A_p,a_{p''}$, where $p'=\ell_a(m)+1$ and 
$p'' =\ell_a(m)+2$ or else
$b_q$ is replaced by $b_{q'},B_p,b_{q''}$, where $q'=\ell_b(m)+1$ and $q''=\ell_b(m)+2$,
depending on the color of the balloon's tail. In the first case we add two new edges
$a_{p'}A_p$ and $a_{p''}A_p$ to the nervure, in the second we add the edges
$b_{q'}B_p$ and $b_{q''}B_q$ to the nervure. In both cases, $p''=p'+1$ or
$q''=q'+1$. In the pictures the edges of the nervure
are thicker than the ones in the respective wing.
For $1\le m\le n$, and $h\in \{\ell,r\}$, the nervure of $\mathcal{W}^h_m$, 
denoted $\mathcal{N}^h_m$, is a spanning tree of the graph 
$\mathcal{W}^h_m \cup \mathcal{N}^h_m \backslash Z$, where $Z= \{z_3^j~|~ j\in \{1, \ldots, 2n\}\}.$
See Fig. \ref{fig:controlmaps01}, Fig. \ref{fig:controlmaps01novo1},  
Fig. \ref{fig:controlmaps01novo2}
and the complete sequence of figures for the $r^{24}_5$-example, 
Figs. \ref{fig:winglist01}-\ref{fig:winglist11}. 
A vertex in the tree $\mathcal{N}^h_m$ is {\em pendant} if it has degree at most 1.

\begin{lemma}
 Let $1\le m \le n$. The set of pendant vertices of $\mathcal{N}_m^\ell$ is
in 1-1 correspondence with the set of 13-gons of $\mathcal{H}_m$. 
 The set of pendant vertices in $\mathcal{N}_m^r$ is
in 1-1 correspondence with the 03-gons of $\mathcal{H}_m$.
\label{lem:internalpointsbigons}
\end{lemma}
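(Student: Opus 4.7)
The plan is to argue by induction on $m$. For the base case $m=1$, the nervure $\mathcal{N}_1^\ell$ is the single vertex $a_1$ (of degree $0$, hence one pendant in the sense defined just before the lemma), which matches the unique 13-gon of $\mathcal{H}_1$ as constructed in the second part of the article; the argument for $\mathcal{N}_1^r$ and the unique 03-gon of $\mathcal{H}_1$ is mirror-symmetric.

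For the inductive step, assume the two bijections hold at level $m$ and examine the passage to level $m+1$ via the $m$-th $wbp$-move. The two sub-cases $c(m)=0$ and $c(m)=1$ are mirror-symmetric, so I treat only $c(m)=0$. In this case $\mathcal{W}^r_m$ and hence $\mathcal{N}_m^r$ are unchanged, and no 03-gon of $\mathcal{H}_m$ is altered, so the right-hand bijection persists verbatim. On the left, a single 13-gon of $\mathcal{H}_m$ is cut by the 0-flip into two 13-gons of $\mathcal{H}_{m+1}$, while in the wing the vertex $a_p$ with $p=t_a(m)$ is replaced by the triple $a_{p'},A_p,a_{p''}$ together with the two new nervure edges $a_{p'}A_p$ and $a_{p''}A_p$.

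The crucial alignment step is to verify that $a_{t_a(m)}$ is pendant in $\mathcal{N}_m^\ell$ and that it corresponds, under the inductive bijection, precisely to the 13-gon being subdivided. This follows from the way $t_a(m)$, $r(m)$, $s(m)$ are read off the balloon's tail: by definition, $a_p$ is the unique 0-simplex where the balloon's head meets its tail in $\mathcal{H}_m^\star$, which lies on the boundary of the 13-gon flipped by the $bp$-move. Granted this, a routine degree count gives that in $\mathcal{N}_{m+1}^\ell$ the vertex $A_p$ has degree exactly $2$ and is not pendant, while $a_{p'}$ and $a_{p''}$ each have degree $1$ and are pendant; every other vertex keeps its degree from $\mathcal{N}_m^\ell$. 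Thus the pendant set changes by removing $a_p$ and adjoining $\{a_{p'},a_{p''}\}$, exactly mirroring the transformation on the 13-gon set. Extending the bijection by sending $a_{p'}$ and $a_{p''}$ to the two newly created 13-gons (using the left-right convention inherited from the balloon's orientation) while leaving all other pendant-to-13-gon assignments unchanged closes the induction.

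The main obstacle is the alignment step: proving that the specific pendant $a_{t_a(m)}$ indexes, under the inductive bijection, the exact 13-gon that is about to be flipped, rather than some other pendant vertex. This is not an arithmetic coincidence but a structural property of the choice of $t_a(m)$ from the balloon data defined in the companion paper; once that is in hand, the remainder reduces to elementary degree bookkeeping on the trees $\mathcal{N}_m^\ell$ and $\mathcal{N}_m^r$.
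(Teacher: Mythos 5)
Your proof is correct and follows essentially the same route as the paper's: an induction on $m$ in which each $wbp$-move replaces one pendant vertex by two new pendants (plus a non-pendant branch vertex) exactly as the corresponding flip splits one bigon into two, with the alignment guaranteed by the dual correspondence between the head--tail intersection PL1-face and the bigon being flipped. The paper states this more tersely via that duality, while your explicit degree bookkeeping is a harmless elaboration (note only that $A_p$ also inherits any nervure edge previously incident to $a_p$, so its degree is $2$ or $3$ --- non-pendant either way).
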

\begin{proof}
The intersection of the $(m-1)$-th balloon's head and tail is a 
PL1-face with two 1-simplices. Their intersection is a point in $\Pi_h$.
The PL1-face dualy corresponds to a 13-gon (resp. 03-gon) in $\mathcal{H}_{m-1}$
if $h=\ell$ ($h=r$). After the 0-flip (resp. 1-flip) that produces  $\mathcal{H}_m$
the PL1-face is splitted into two, in a conformal way with the passage 
from  $\mathcal{W}^h_{m-1} \cup \mathcal{N}^h_{m-1}$ to  
$\mathcal{W}^h_m \cup \mathcal{N}^h_m$. Given this interpretation
the Lemma is easily established by induction.
\end{proof}

\begin{figure}[!htb]
\begin{center}
\includegraphics[width=15.4cm]{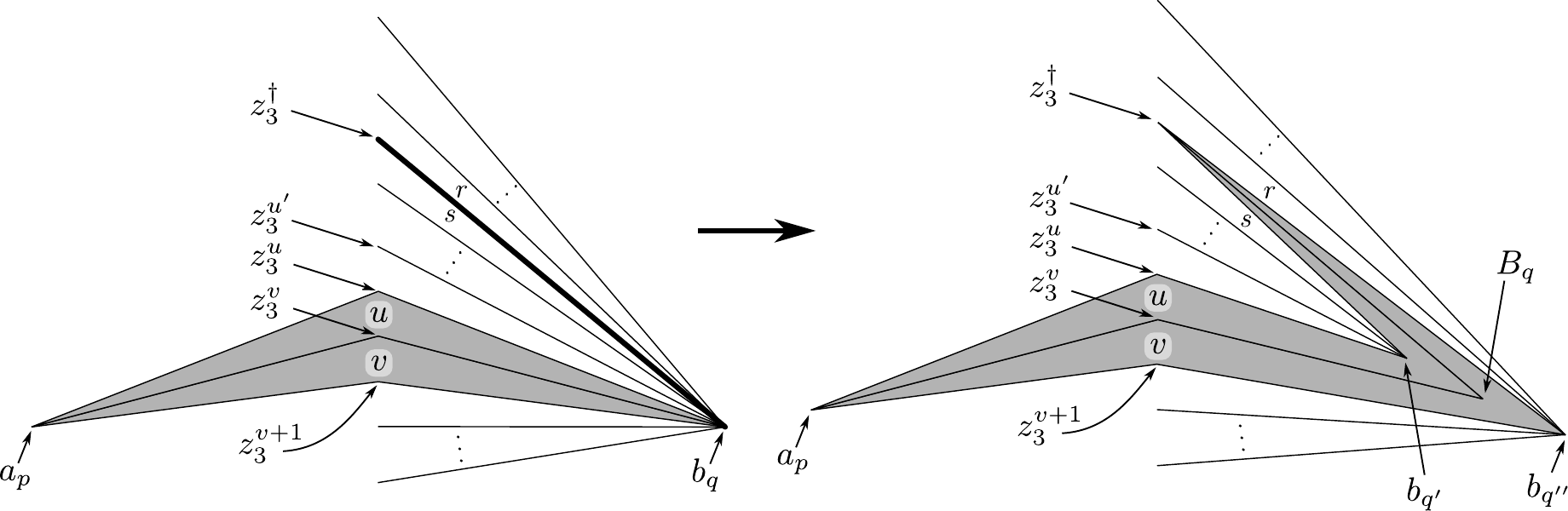}
\caption{$wbp$-move: balloon's head section is painted in gray, 
and the part of balloon's tail that intersecting the appropriate
semi-plane is depicted as a {\em thick edge}. \index{thick edge}}
\label{fig:controlmaps01}
\end{center}
\end{figure}

\begin{figure}[!htb]
\begin{center}
\includegraphics[width=15cm]{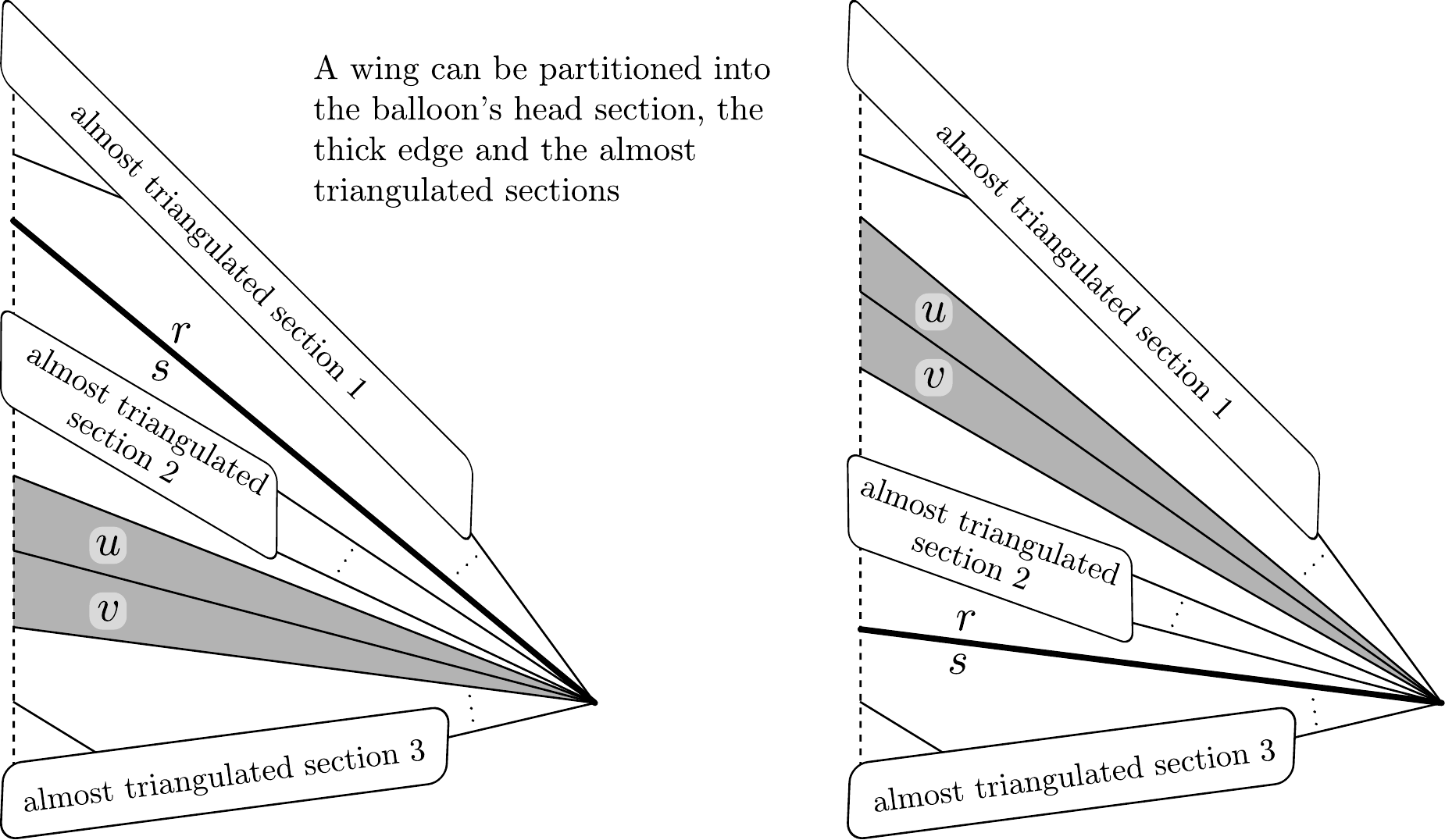}
\caption{Given a half-wing and a balloon it can be partitioned into the 
balloon's head section, thick edge and
some {\em almost triangulated} sections.}
\label{fig:controlmaps01novo1}
\end{center}
\end{figure}

\begin{figure}[!htb] 
\begin{center}
\includegraphics[width=15 cm]{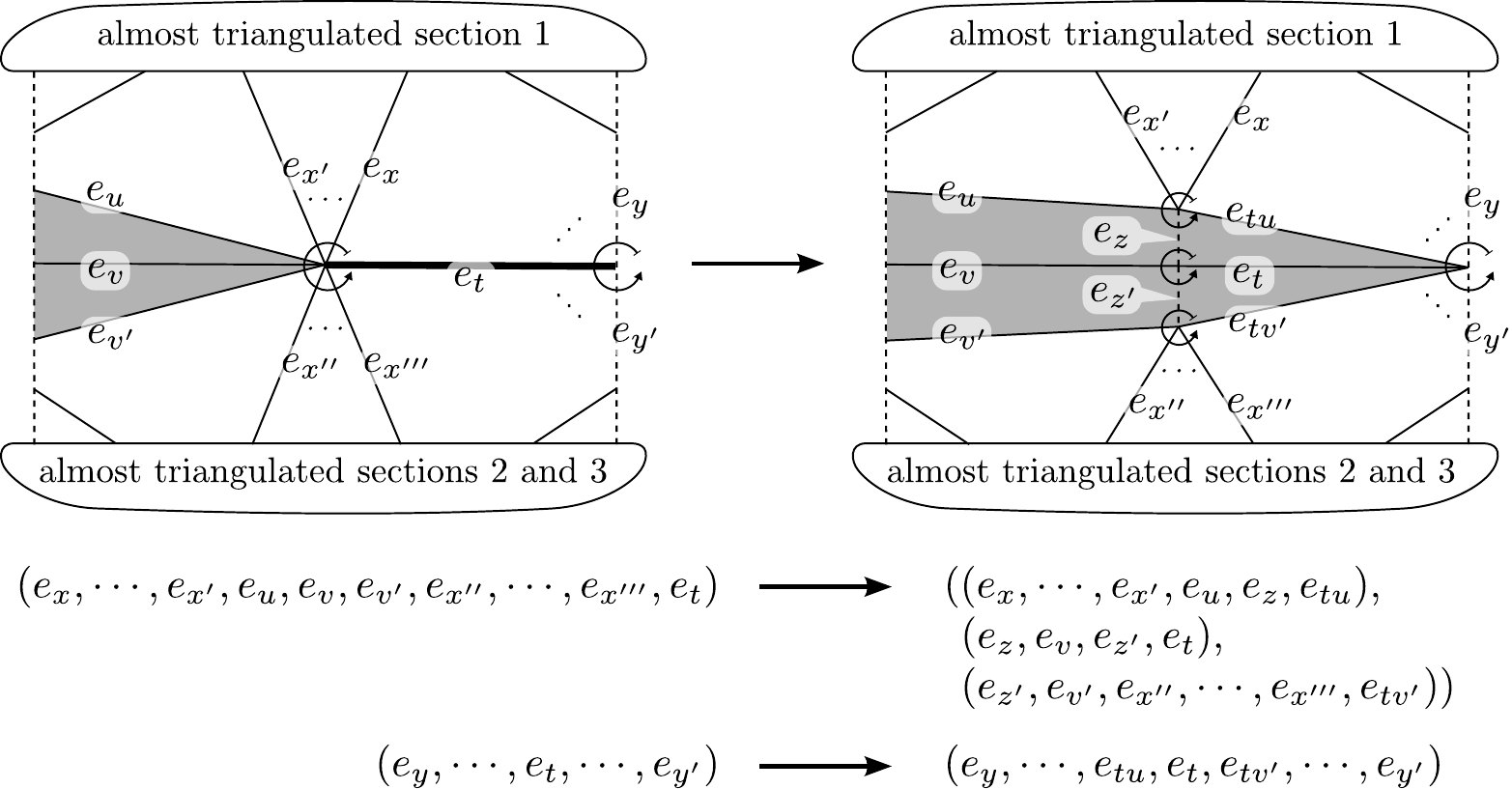}
\caption{The {\em star} \index{star} of a vertex of a graph embedded in a surface is the counterclockwise cyclic sequence of 
edges incident to the vertex (such an ordering is induced by the surface).
The set of stars is called a { \em rotation} \index{rotation} and has the 
characterizing property that each edge appears twice.
The general case of changing rotation when going from 
$\mathcal{W}_{\ell-1} \cup \mathcal{N}_{\ell-1}$ to
$\mathcal{W}_{\ell} \cup \mathcal{N}_{\ell}$ is depicted above. 
The rotation completely specifies the topological embedding.}
\label{fig:controlmaps01novo2}
\end{center}
\end{figure}

A graph is {\em rectilinearly embedded into $\mathbb{R}^3$} if 
the images of their edges are straight
line segments.
It is a straightforward application of Tutte's barycentric method (\cite{tutte1963dg},
\cite{colin2003tutte}) to obtain a rectilinear embedding of
$\mathcal{W}_n^h \cup \mathcal{N}_n^h$ which fixes the vertices in the boundary of the 
outer triangular region of $\Pi_h$, $h \in \{\ell,r\}$.
Tutte's method has an intrinsic connection with the Laplacian of graphs, see  \cite{Klarreich0412}.
We rotate $\Pi \in \{ \Pi_\ell, \Pi_r \}$
so that it becomes the $xz$-plane. After having the planar coordinates
$\Pi_h$ is rotated back to its initial position and we have the 
$\mathcal{W}_n^h \cup \mathcal{N}_n^h$ rectilinearly embedded into $\mathbb{R}^3$.
Tutte's method becomes very efficient because of Lemma \ref{lem:numberofedges}.

Tutte's method suffers of the clustering problem where vertices accumulate
in some small regions. Even though theoretically this is not 
needed, we present a heuristic of attaching weights to the edges
to improve the result. An edge with weight $k\in\mathbb{N}$ behaves
as $k$ parallel edges.
We define the weights for the edges of the wings as 1.
If the edge is in the nervure,
to calculate the weight we use the whole wing. 
Start defining these weights as 0, so from leaves to root, 
define the weight of an edge
as the weight of the vertex incident to it and closer to 
the leaves minus 1, Fig. \ref{fig:arvorecompesos}.
In Fig. \ref{fig:controlmapsnovo6} we compare the two results,
without and with weights given by three times the weights in the nervure 
of Fig. \ref{fig:arvorecompesos}, 
for obtaining the final left wing of the $r^{24}_5$-example.

Tutte's method is applied twice: to plane graphs $\mathcal{W}_n^\ell \cup \mathcal{N}_n^\ell$ and to $\mathcal{W}_n^r \cup \mathcal{N}_n^r.$
In each application we use $O(n)$ iterations to solve a linear system in $\mathbb{C}$. This is theoretically sufficient
to achive rectilineatiry (which nevertheless can be verified). As each one of the plane graphs has less than $6n-4$ edges by Lemma \ref{lem:numberofedges}, 
the total time to obtain $\mathcal{W}_n^\ell \cup \mathcal{W}_n^r$ embedded into $\Pi_\ell \cup \Pi_r$ is $O(n^2)$.

\begin{lemma}\label{lem:numberofedges}
 The number of edges of $\mathcal{W}_n^h \cup \mathcal{N}_n^h$, $h \in \{\ell,r\}$
is at most $6n-4$.
\end{lemma}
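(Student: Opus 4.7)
The plan is induction on $m$, tracking how the edge count of $\mathcal{W}_m^h \cup \mathcal{N}_m^h$ grows with each $wbp$-move.

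For the base case $m=1$, the definition gives exactly $2n$ wing edges (the segments $a_1 z_3^j$ or $b_1 z_3^j$, for $j=1,\ldots,2n$) and $\mathcal{N}_1^h$ is the degenerate one-vertex tree, so $|E(\mathcal{W}_1^h \cup \mathcal{N}_1^h)| = 2n \leq 6n-4$ for every $n\ge 1$.

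For the inductive step, I would exploit the fact that each of the $n-1$ $wbp$-moves modifies exactly one of the two wings, leaving the other untouched. If the $m$-th move does not act on wing $h$, the bound is inherited trivially. If it does act on wing $h$, a single interior vertex ($a_p$ if $h=\ell$, $b_q$ if $h=r$) is split into three, and I would establish the local claim that this move introduces at most $4$ new edges: exactly $2$ nervure edges -- the pair $a_{p'}A_p, a_{p''}A_p$ or $b_{q'}B_q, b_{q''}B_q$ made explicit in the text -- together with at most $2$ new wing edges needed to realize the subdivision of the affected PL1-face into a pair, as demanded by the 0-flip (resp.\ 1-flip). The existing wing edges incident to the split vertex are merely reassigned among $a_{p'}, A_p, a_{p''}$ (or $b_{q'}, B_q, b_{q''}$) according to the rotation update depicted in Fig.~\ref{fig:controlmaps01novo2}, and none is deleted.

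Granting the local claim and summing over the $n-1$ moves (at most all of which can act on wing $h$), we obtain
\[
|E(\mathcal{W}_n^h \cup \mathcal{N}_n^h)| \;\leq\; 2n + 4(n-1) \;=\; 6n-4.
\]
The main obstacle is precisely the bound on the number of new wing edges per move: this requires a careful case analysis of the two flip types and of the rotation update in Fig.~\ref{fig:controlmaps01novo2}, verifying in each case that the partition of the affected wing into the balloon's head section, the thick edge, and the almost-triangulated sections (as in Fig.~\ref{fig:controlmaps01novo1}) gains exactly two new wing segments -- one adjacent to each endpoint of the nervure segment through $A_p$ (or $B_q$) -- while no previously present edge of the wing is removed.
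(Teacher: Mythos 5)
Your proposal is correct and follows essentially the same route as the paper's proof: start from the $2n$ initial wing edges and observe that each of the $n-1$ $wbp$-moves contributes $4$ edges to exactly one wing-plus-nervure, giving $2n+4(n-1)=6n-4$. The only difference is that you make explicit the split of the $4$ new edges into $2$ nervure edges and at most $2$ wing edges and flag the case analysis needed to verify it, whereas the paper simply asserts the count of $4$.
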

\begin{proof}
The number of 1-simplices in the left wing and in the right wing of the 
initial complex in the sequence are both $2n$. At each one of the $n-1$ $bp$-moves
we add 4 edges either to the left or to the right wing with its nervure.
Thus each one of the final left and right wings with nervures has at most $6n-4$ edges.
\end{proof}

\begin{figure}[!htb]
\begin{center}
\includegraphics[width=13cm]{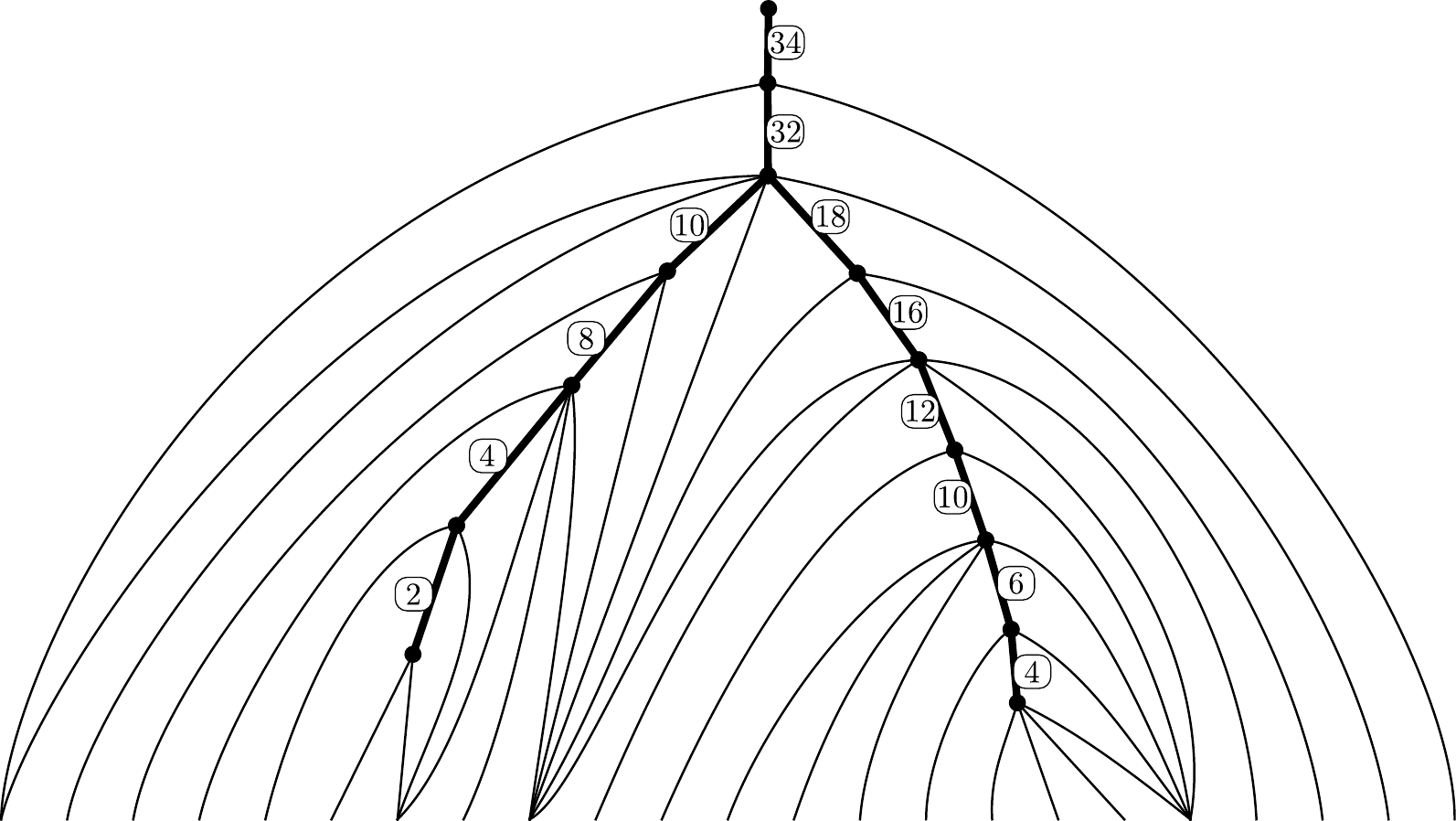}
\caption{Computing the weights for Tutte's barycentric method via the wing nervure.}
\label{fig:arvorecompesos}
\end{center}
\end{figure}

\begin{figure}[!htb]
\begin{center}
\includegraphics[width=13cm]{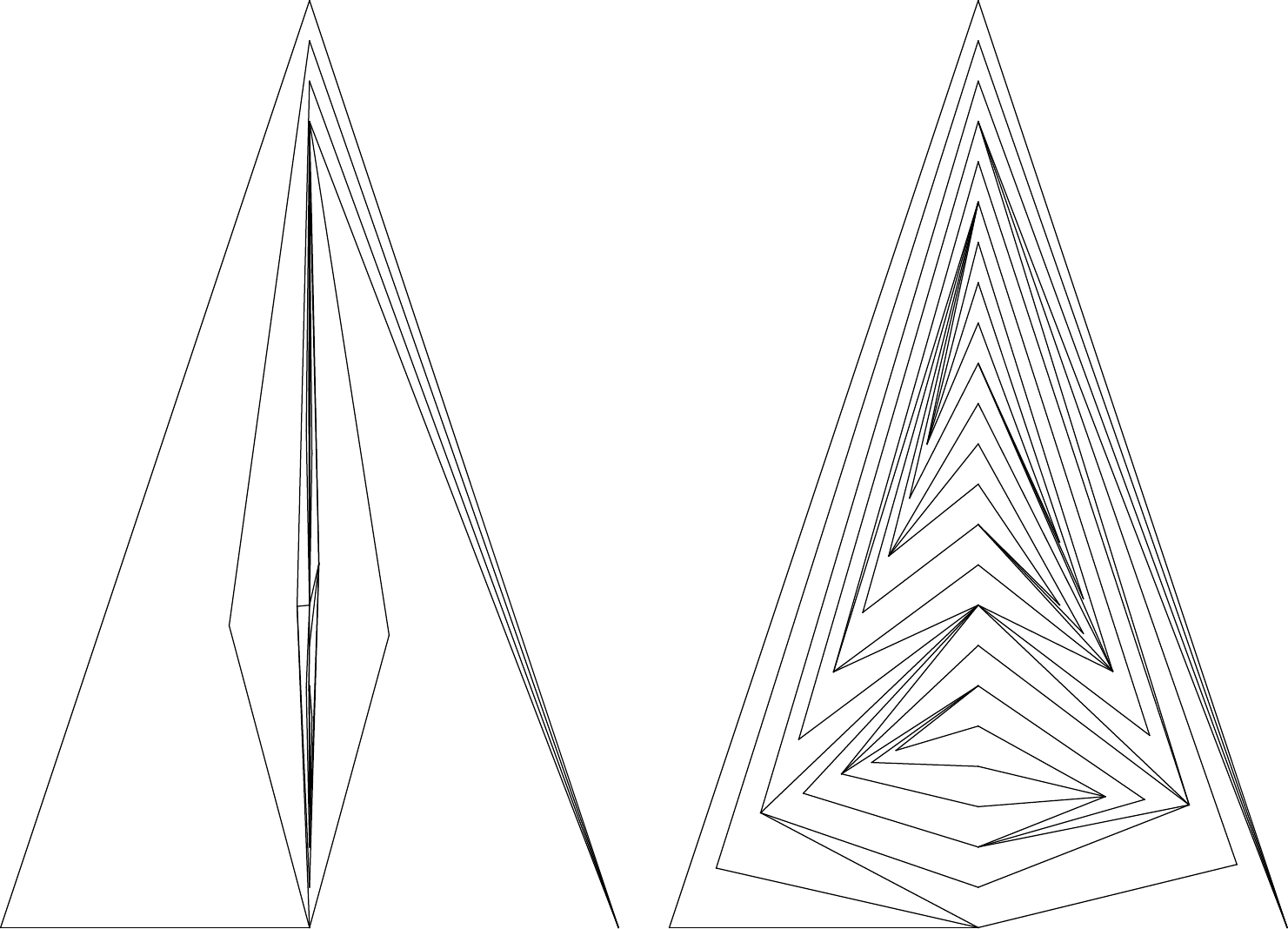}
\caption{Tutte's embbeding without and with the weights (final pair of 
wings without nervure for the $r_5^{24}$-example).}
\label{fig:controlmapsnovo6}
\end{center}
\end{figure}

\newpage

\subsection{Defining the PL-embedding $\mathcal{H}_{1}^\diamond$}\label{sec:rect}

Let $\mathcal{L}_{i+1}^\star$ be a subset of the pillow $\mathcal{P}_{i+1}^\star$, formed by the part
that comes from the tail of the balloon after the i-th $bp$-move is applied, see Fig. \ref{fig:U3}.

\begin{figure}[!htb]
\begin{center}
\includegraphics[width=13cm]{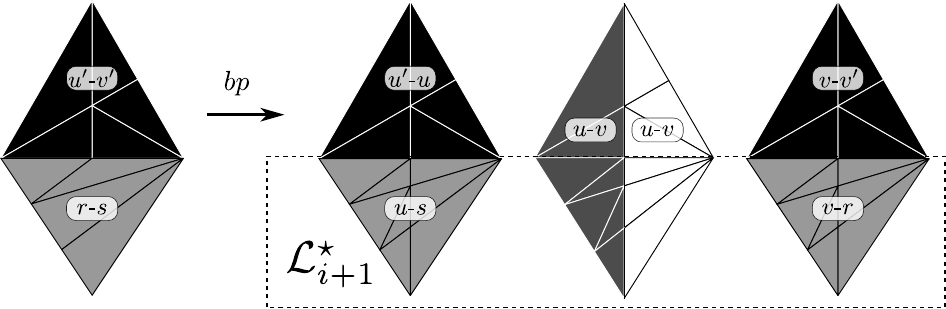}
\caption{The set $\mathcal{L}_{i+1}^\star$.} 
\label{fig:U3}
\end{center}
\end{figure}


Let $\{x\} \cup Y \subseteq \mathbb{R}^N$, for $1\le N \in \mathbb{N}$. 
The \index{cone} {\em cone} (\cite{rourke1982introduction}) 
with vertex $x$ and base $Y$, denoted $x \ast Y \subseteq \mathbb{R}^N$,
is the union of $Y$ with all line 
segments which link $x$ to $y \in Y$.


Now we define a PL-complex $\mathcal{H}^\diamond_1$ explicitly embedded into $\mathbb{R}^3$. 
We use the (rectilinearly embedded into $\Pi_\ell \cup \Pi_r$) 
final wings and the cone contruction to get the 
$\mathcal{H}^\diamond_1$. To this end, select a distinguished representative 
of the edges of $\mathcal{W}_n^\ell$ (resp. $\mathcal{W}_n^r$) incident to $z_3^j$ in the following way: if there is just one edge, choose it. 
Otherwise the representative is the edge whose other end has the smallest indexed upper case label.
Let $R$ denote the set of representatives.

For each $e \in R$ 
add the two 2-simplices $z_0\ast e$ and $z_2\ast e$ (resp. the two 2-simplices 
$z_1\ast e$ and $z_2\ast e$) to  $\mathcal{H}^\diamond_1$.  
To complete 
$\mathcal{H}^\diamond_1$ add the 
2-simplices $\{z_3^jz_1z_0 \ | \ j=1,\ldots,2n\}$.
In Fig. \ref{fig:U} the solid lines
  (the edges of $R$) and the dashed edges are part of $\mathcal{L}^\star_i$,
and are treated in next section.

\begin{figure}[!htb]
\begin{center}
\includegraphics[width=15cm]{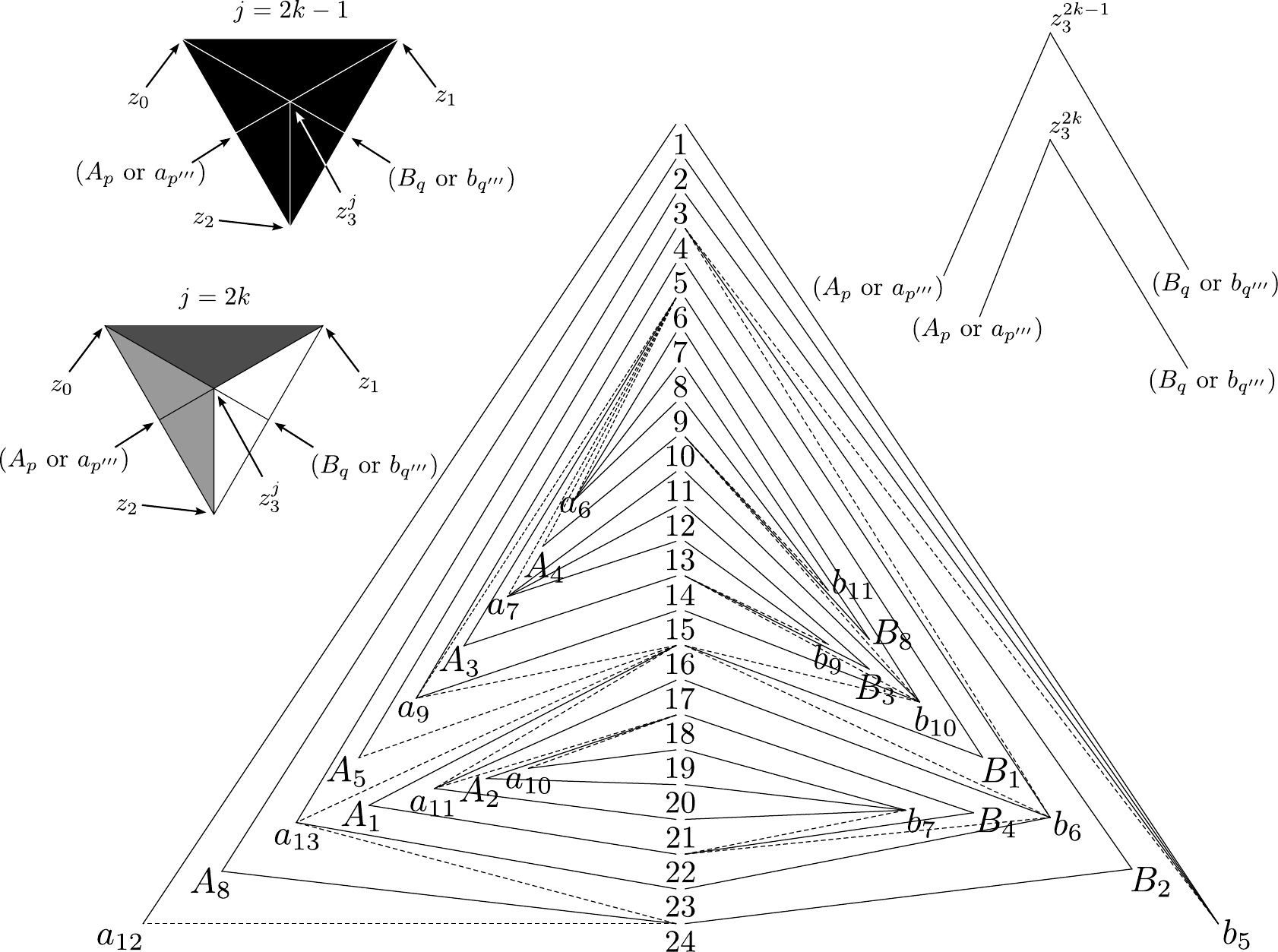}
\caption{We use the cone construction with the solid lines to obtain $\mathcal{H}_1^\diamond.$ 
Then we use the dashed lines
to obtain the information of $z_3^\dagger$ and $a_{p'}, A_p, a_{p''}$ or $b_{q'}, B_q, b_{q''}$ 
latter when obtaining $\mathcal{L}_i^\star$. Also, we have $p'''\in\{p' ,p''\}$ and  $q'''\in\{q' ,q''\}$.} 
\label{fig:U}
\end{center}
\end{figure}

\begin{proposition}
\label{cor:embedcone}
If $\mathcal{W}_m^h$ is embedded rectilinearly in $\Pi_h$, $h \in \{\ell,r\}$,
then the pair of embeddings can be 
extended to an embedding of $\mathcal{H}^\diamond_1$ into $\mathbb{R}^3$,
via the cone construction.
\end{proposition}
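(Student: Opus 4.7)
My plan is to verify that the three groups of 2-simplices comprising $\mathcal{H}_1^\diamond$ attach along the correct lower-dimensional faces so as to yield a genuine PL-embedding into $\mathbb{R}^3$. These groups are (i) the cones $z_0 \ast e$ and $z_2 \ast e$ for $e \in R \cap \mathcal{W}_n^\ell$, which all lie in the closed half-plane $\overline{\Pi}_\ell$; (ii) the analogous cones $z_1 \ast e$ and $z_2 \ast e$ for $e \in R \cap \mathcal{W}_n^r$, lying in $\overline{\Pi}_r$; and (iii) the bridging triangles $z_3^j z_1 z_0$ for $j = 1,\ldots,2n$. Because $\overline{\Pi}_\ell \cap \overline{\Pi}_r$ is precisely the $z$-axis (which contains $z_2$ and every $z_3^j$), any intersection between families (i) and (ii) is forced onto the axis, where they meet simplicially at shared boundary segments.

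The first step is to establish that within each half-plane $\overline{\Pi}_h$ the selected cones produce a valid rectilinear planar dissection. Since $\mathcal{W}_n^h$ is rectilinearly embedded by hypothesis, and $R$ selects exactly one wing edge at each boundary vertex $z_3^j$, every representative edge $e$ is unobstructedly visible from both cone apices on the axis and from the interior apex $z_0$ (resp.\ $z_1$). Consequently the cones $z_0 \ast e$ and $z_2 \ast e$, as $e$ ranges over $R \cap \mathcal{W}_n^\ell$, are non-degenerate triangles with pairwise disjoint interiors and together with $\mathcal{W}_n^\ell$ dissect the outer triangular region. The same argument applied to $\mathcal{W}_n^r$ in $\overline{\Pi}_r$ handles family (ii).

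Next I would check that the bridging triangles $z_3^j z_1 z_0$ fit together and glue correctly to (i) and (ii). They all share the edge $z_0 z_1$, and since the line through $z_0$ and $z_1$ is not coplanar with the $z$-axis (as $\Pi_\ell$ and $\Pi_r$ are distinct half-planes meeting along that axis), each bridging triangle occupies its own 2-plane through $z_0 z_1$; two of them therefore intersect only along that common edge. Moreover, each bridging triangle meets $\overline{\Pi}_\ell$ in the single segment $z_0 z_3^j$ and meets $\overline{\Pi}_r$ in the single segment $z_1 z_3^j$, and both of these are precisely cone edges already present in families (i) and (ii) coming from the representative in $R$ incident to $z_3^j$. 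Hence the attachment is simplicial and no new intersections are introduced.

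The main obstacle is the visibility claim in the first step: one must rule out that another edge of $\mathcal{W}_n^h$ crosses the interior of a cone triangle $z_i \ast e$. This is exactly where rectilinearity of Tutte's embedding and the single-representative rule for $R$ become essential. Rectilinearity places every wing edge as a straight segment inside the outer triangular region so that sight lines from an axis vertex cannot be obstructed, while the choice of one $R$-edge per $z_3^j$ prevents two cones with the same apex from sharing the vertex $z_3^j$ and thus overlapping. Once these checks are completed, the union of (i), (ii), and (iii) is a PL-embedded 2-complex that extends the two wing embeddings, producing the required embedding of $\mathcal{H}_1^\diamond$ into $\mathbb{R}^3$.
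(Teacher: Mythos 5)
The paper's own proof of this proposition is the single sentence ``Straightforward from the simple geometry of the situation,'' so your write-up is already more explicit than the original. Your decomposition into the two families of cones lying in $\overline{\Pi}_\ell$ and $\overline{\Pi}_r$ plus the book of bridging triangles about the segment $z_0z_1$, and the observation that intersections between the two families are forced onto the $z$-axis, is surely the intended argument; your checks on the bridging triangles (pairwise intersection equal to the edge $z_0z_1$, intersection with each closed half-plane equal to the single cone edge $z_0z_3^j$ resp.\ $z_1z_3^j$) are fine, modulo one small slip: two \emph{distinct} half-planes with a common boundary line can still be coplanar, so ``the line through $z_0$ and $z_1$ is not coplanar with the $z$-axis'' requires the dihedral angle between $\Pi_\ell$ and $\Pi_r$ to be less than $\pi$, not merely that the half-planes are distinct.

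The genuine gap sits exactly at the step you yourself flag as the main obstacle and then dispose of in one sentence: the claim that rectilinearity of the Tutte embedding makes every representative edge ``unobstructedly visible'' from $z_0$ and $z_2$, so that the cone triangles within one half-plane have pairwise disjoint interiors. Rectilinearity of $\mathcal{W}_n^h$ does not imply this. The apices $z_0$ and $z_2$ are fixed points of $\overline{\Pi}_\ell$ determined by $a_1=(z_0+z_2)/2$, while the far endpoints $w_j$ of the representatives are placed by Tutte's method somewhere inside the outer triangular region; nothing in your argument prevents the segment $z_0w_j$ from crossing another representative edge, nor two triangles $z_0\ast e_j$ and $z_0\ast e_{j'}$ from overlapping. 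A sharp instance: the selection rule for $R$ does not forbid two or more representatives $e_j=z_3^jw$ and $e_{j'}=z_3^{j'}w$ from sharing their far endpoint $w$ (in the initial wing \emph{all} edges share the endpoint $a_1$); then $z_0\ast e_j$ and $z_0\ast e_{j'}$ share the edge $z_0w$ and their interiors overlap in a two-dimensional neighbourhood of that edge whenever $z_3^j$ and $z_3^{j'}$ lie on the same side of the line through $z_0$ and $w$ --- which is unavoidable once three representatives share an endpoint. What is needed, and missing, is an argument that the representatives' far endpoints are radially monotone as seen from $z_0$ and from $z_2$ (equivalently, that the cones sweep out pairwise disjoint angular sectors), using the actual positions of $z_0,z_2$ relative to the outer triangle and the specific structure of $R$. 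Without that, the assertion that the cones ``dissect the outer triangular region'' with pairwise disjoint interiors is unsupported, and it is precisely the content of the proposition.
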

\begin{proof}
 Straighforward from the simple geometry of the situation.
\end{proof}

\subsection{Blowing up the tails and
constructing\\ $\mathcal{H}_{2}^\diamond, \mathcal{H}_{3}^\diamond, 
\ldots, \mathcal{H}_{n}^\diamond=\mathcal{H}_{n}^\star$}

The process of replacing the embedded tail of a balloon by the corresponding
trio of PL2-faces in the pillow is denominated \index{blow up a tail} {\em the blowing up of the 
balloon's tail}.

\begin{theorem}\label{theo:teoremadeumalinha}
 There is an $O(n)$-algorithm for blowing up a single balloon's tail.
Thus finding $\mathcal{H}_n^\star$ and take $O(n^2)$ steps.
\end{theorem}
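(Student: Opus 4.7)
The plan is to establish the per-move $O(n)$ bound directly by describing the blow-up as a local combinatorial-geometric update, and then to obtain the global $O(n^2)$ bound as the composition of $\mathcal{H}_1^\diamond$ (built in $O(n^2)$ via Tutte, by the discussion preceding Lemma~\ref{lem:numberofedges}) with $n-1$ successive blow-ups.

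First I would fix data structures so that each complex $\mathcal{H}_i^\diamond$ is stored as linked lists of 0-, 1-, and 2-simplices of total size $O(n)$ (this is consistent with the edge bound in Lemma~\ref{lem:numberofedges} and with the fact that the cone construction of Proposition~\ref{cor:embedcone} adds only $O(n)$ triangles on top of the wings), together with pointers from each tail PL1-face to its incident cone 2-simplices in the style of Fig.~\ref{fig:U}, and with the auxiliary functions $c,u,v,r,s,\ell_a,\ell_b,t_a,t_b$ precomputed in $O(n)$ total. For the $(i-1)$-th blow-up I would then: (1) read off in $O(1)$ the color $c(i-1)$ and the index $p=t_a(i-1)$ (or $q=t_b(i-1)$) identifying the tail vertex $a_p$ (resp. $b_q$) inside the appropriate wing; (2) locate, via the stored pointers, the two PL2-faces of $\mathcal{H}_{i}^\diamond$ that realize the tail as the cone from $z_0$ (resp. $z_1$) and $z_2$ over the distinguished edge $e\in R$ as in Fig.~\ref{fig:U}; (3) create the new 0-simplices $a_{p'},A_p,a_{p''}$ (resp. $b_{q'},B_q,b_{q''}$) as midpoints of already-embedded segments, which is $O(1)$ arithmetic each; (4) replace the pair of PL2-faces by the trio of PL2-faces dictated by $\mathcal{L}_{i}^\star$ of Fig.~\ref{fig:U3}, updating incidence lists accordingly.

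The only step whose cost is not immediately constant is (4): the incidence update must touch every simplex sharing the affected tail, and in the worst case this neighborhood has size $O(n)$ because a single tail can be adjacent to up to $O(n)$ previously blown-up pillow faces along the common nervure edges. Bounding this neighborhood by the local degree in $\mathcal{W}_n^h\cup\mathcal{N}_n^h$, which is at most $O(n)$ by Lemma~\ref{lem:numberofedges}, gives the claimed $O(n)$ per blow-up. Geometric correctness of the update (no self-intersection) follows because each new vertex is the midpoint of a segment lying in the rectilinearly embedded cone over the wing, and midpoint subdivision of an embedded PL1-face inside an embedded PL2-face preserves the PL-embedding; this is the ``simple geometry'' already invoked in Proposition~\ref{cor:embedcone}, now applied iteratively.

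Finally, I would combine the pieces: by the induction just sketched, after all $n-1$ blow-ups we reach $\mathcal{H}_n^\diamond$, which by construction coincides with $\mathcal{H}_n^\star$ since every balloon tail has been expanded into the corresponding pillow. The total cost is $O(n^2)$ for producing $\mathcal{H}_1^\diamond$ plus $(n-1)\cdot O(n)=O(n^2)$ for the blow-up phase, giving the stated bound. The main obstacle I expect is step~(4): carefully specifying the pointer bookkeeping so that the combinatorial replacement is genuinely linear rather than quadratic per move, i.e., so that one does not accidentally rescan the whole complex each time — handling this cleanly is what makes the $O(n)$ claim non-trivial and what justifies the final $O(n^2)$ rather than a naive $O(n^3)$.
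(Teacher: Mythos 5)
There is a genuine gap on the geometric side. Your complexity bookkeeping is fine (indeed more explicit than the paper's, which barely addresses the $O(n)$ claim computationally and instead devotes essentially the whole proof to the embedding), but the step you dismiss as ``simple geometry applied iteratively'' is precisely where the nontrivial content lives. Two things are missing. First, the new trio of PL2-faces does not fit into the space currently occupied by the old tail: the paper must \emph{shrink} the adjacent PL3-faces $\nabla_r$ (and, in the refined case, $\nabla_s$) --- the ``$\epsilon$-changes'' --- to create room for the new 0- and 2-colored PL2-faces of $\nabla_u$ and $\nabla_v$; your update (4) replaces faces in place and never accounts for this. Second, and more seriously, your assertion that ``midpoint subdivision of an embedded PL1-face inside an embedded PL2-face preserves the PL-embedding'' is exactly what fails in one of the cases: when the balloon's tail is of refined type $B_i'$ or $P_i'$ and the PL2$_0$-face of $\nabla_r$ is itself refined, defining $\alpha_j$ as the naive midpoint $\frac{\omega_j+\beta_j}{2}$ makes some 1-simplices cross. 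The paper's fix is the ``bump'': a non-convex pentagon with $\nu_j=\frac{z_2+\omega_j}{2}$ and $\alpha_j=\frac{\beta_{j-1}+\nu_j}{2}$ (Fig.~\ref{fig:3d6}). Without a case analysis on the tail type ($P_1/B_1$, unrefined $B_i/P_i$ with $i>1$, refined $B_i'/P_i'$, and the bump subcase) your argument proves embeddability only in the easy cases and silently asserts it in the one case where it is false as stated.

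A fair summary: your proposal and the paper agree on the skeleton (local replacement of the tail by $\mathcal{L}_i^\star$, new vertices as midpoints of already-embedded segments, $(n-1)\cdot O(n)=O(n^2)$ total), and your data-structure discussion is a useful supplement the paper omits. To close the gap you would need to (i) specify the $\epsilon$-shrinking of $\nabla_r$ and $\nabla_s$ that makes room for the new faces, and (ii) verify non-crossing separately in each tail type, including the bump correction, rather than appealing to Proposition~\ref{cor:embedcone}, which only covers the cone over the wings and says nothing about the iterated subdivisions inside previously placed pillows.
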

\begin{proof}
$\mathcal{H}_{i+1}^\diamond$ is the union of $\mathcal{H}_{i}^\diamond$ with $\mathcal{L}_{i+1}^\star$
and an $\epsilon$-change in some PL3-faces, if the rank of the type of balloon's tail of the i-th $bp$-move has rank greater than 1
(we call $\epsilon$-change because this change is small, as described below).
At the same time we update the colors of the middle layer to match the colors of the $i$-th pillow in the sequence of $bp$-moves.


Now we describe how to embed each kind of $\mathcal{L}_{i}^\star$ 
(explaining how to $\epsilon$-change some PL3-faces,
to get space for $E\mathcal{L}_i^\star$).

If the balloon's tail is of type $P_1$ (the case $B_1$ is analogous).
Make two copies of $P_1$, resulting in three $P_1$, but change the color of the one which will be in
the middle, and define the 0-simplices like in Fig. \ref{fig:3d2}.

\begin{figure}[!htb] 
\begin{center}
\includegraphics[width=14cm]{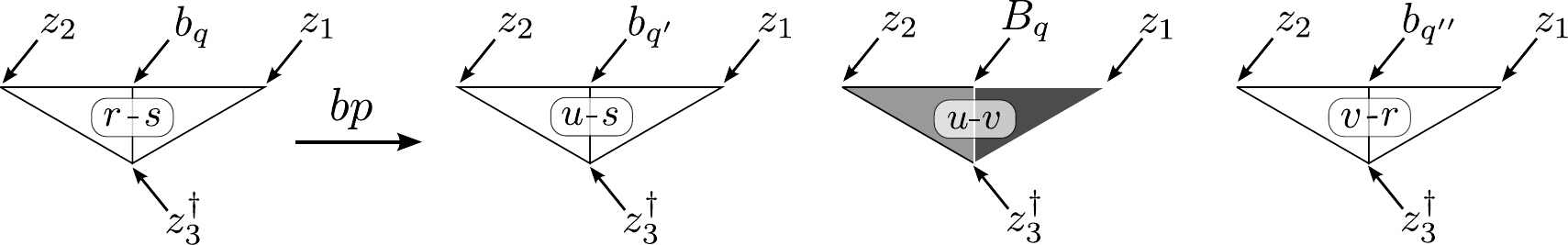}
\caption{Embedding the part of the pillow corresponding to the tail of the balloon: case $P_1$ of the tail.}
\label{fig:3d2}
\end{center}
\end{figure}
If the balloon's tail is of type $B_i$, $i>1$ (the case $P_i$ is analogous).
Make two copies of $B_i$, refine the copies and the original, resulting in three $B_i'$, 
but change the color of the one which will be in
the middle, and define the 0-simplices like in Fig. \ref{fig:3d3}.

The images $\chi_j$ we already know from previous $bp$-move, now we need to define all the images
$\alpha_j, \beta_j$ and $\gamma_j.$
Let $\beta_j$ be $\frac{z_2+\chi_{j+1}}{2}$ 
for each $j=1,\ldots, i$.
As the images $\alpha_j$ and $\gamma_j$ can be defined in analogus way, we just explain how to define each $\alpha_j$.
We know that each $\alpha_j$ is in the PL3-face $\nabla_r$. To 
define each $\alpha_j$ we need 
to reduce the PL3-face $\nabla_r$
the get enough space for the PL2-faces of color 0 and 2 of the PL3-faces $\nabla_u$ and $\nabla_v$.
Consider the PL3-face $\nabla_r$, each $\beta_j$ is already defined, so 
define each $\zeta_j$ as $\frac{z_2+\omega_{j+1}}{2}$, where $\omega_k$ is previously defined, 
 see Fig. \ref{fig:nextdual3}. Define $\alpha_j$ as $\frac{\zeta_j+\beta_j}{2}$. 

\begin{figure}[!htb] 
\begin{center}
\includegraphics[width=14cm]{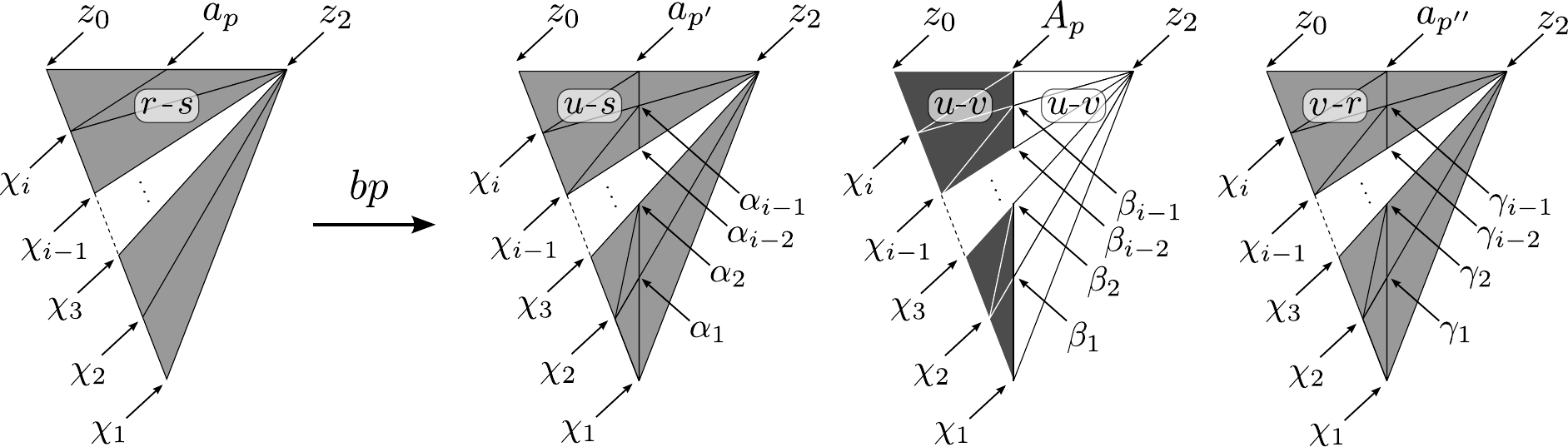}
\caption{Embedding the part of the pillow corresponding to the tail of the balloon: case $B_i$ of the tail.}
\label{fig:3d3}
\end{center}
\end{figure}

\begin{figure}[!htb] 
\begin{center}
\includegraphics[scale=0.8]{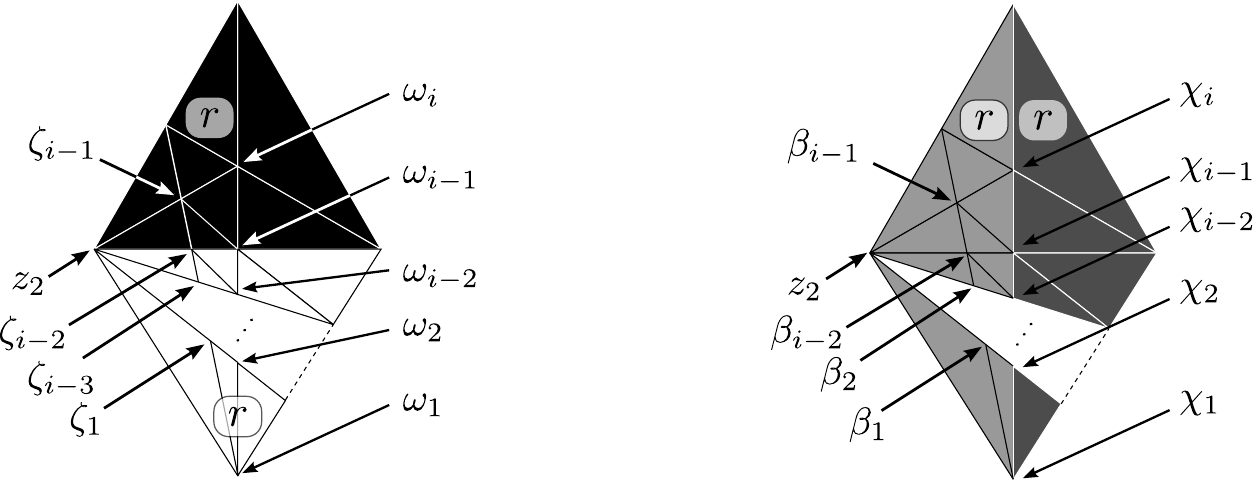}
\caption{Using the PL3-face corresponding to $r$ to define the $\alpha_j$ as $\frac{\zeta_j+\beta_j}{2}$.}
\label{fig:nextdual3}
\end{center}
\end{figure}

The last case is when balloon's tail is refined, that means it is of type $P_i'$ or $B_i'$, $i>1$. We treat the case $B_i'$,
 see Fig. \ref{fig:3d4}. All the 0-simplices $\beta_j$ are already defined, we need to define each
$\alpha_j$ and each $\gamma_j$. Observe that here $r\neq s-1$ and the definitions of $\alpha_j$ and $\gamma_j$ 
are not analogous.

\begin{figure}[!htb] 
\begin{center}
\includegraphics[width=15cm]{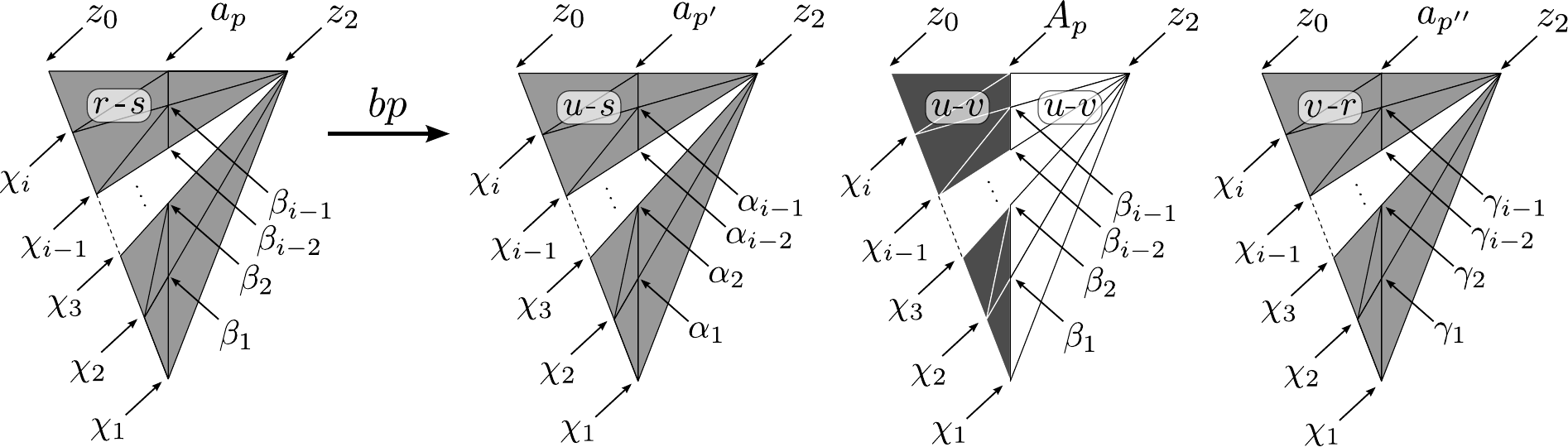}
\caption{Embedding the part of the pillow corresponding to the tail of the balloon: case $B_i'$ of the tail.}
\label{fig:3d4}
\end{center}
\end{figure}

In this case we need to reduce the PL3-faces $\nabla_r$ and $\nabla_s$
to create enough space to build PL2-faces 0- and 2-colored.
To define 0-simplices $\alpha_j$ and $\gamma_j$, one of these cases is analogous to the case not
 refined, but the other we describe here ($\nabla_r$ is in the new case is the rank 
 of PL2$_0$-face is equals to the rank of the PL2$_1$-face plus 2, if its not true,
the new case is in the PL3-face $\nabla_v$).
Suppose that the new case is in the PL3-face, $\nabla_r$. To define $\alpha_j$,
suppose that the PL2$_0$-face of this PL3-face is not refined,
 see Fig. \ref{fig:3d5}. Define each $\alpha_j$  as the middle point between $\beta_j$ and $\omega_j$.

\begin{figure}[!htb] 
\begin{center}
\includegraphics[scale=0.7]{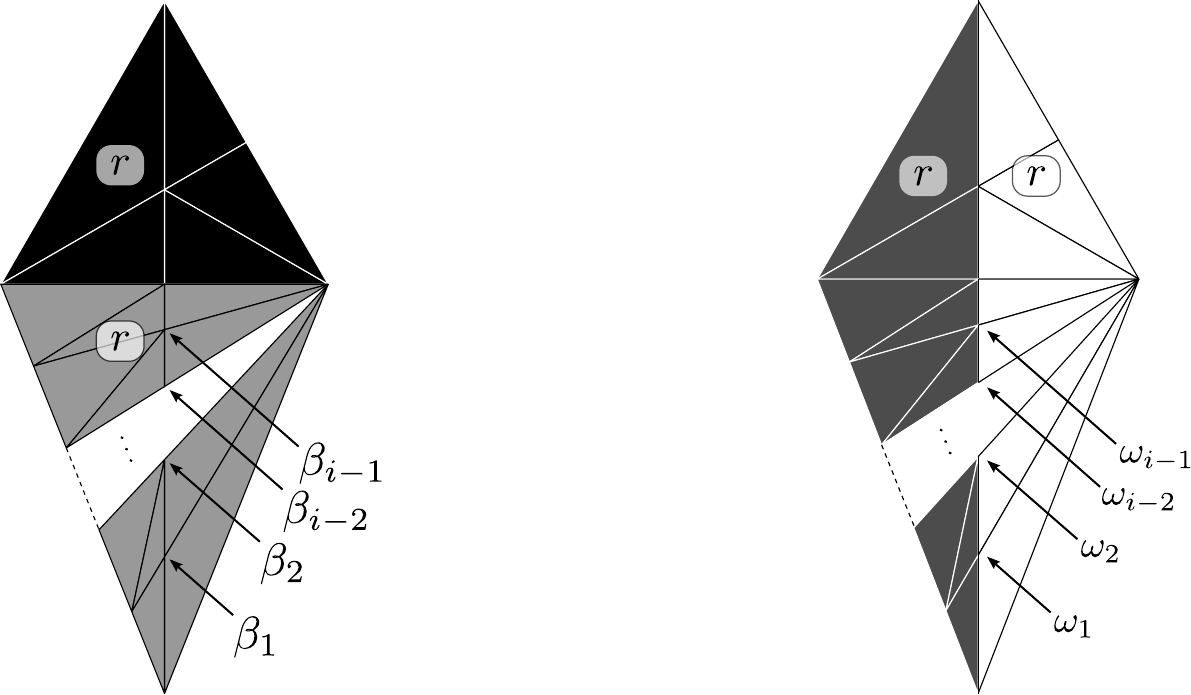}
\caption{Using the PL3-face $\nabla_r$ to define $\alpha_j$ as $\frac{\omega_j+\beta_j}{2}$.}
\label{fig:3d5}
\end{center}
\end{figure}

Consider the case that the PL2$_0$-face, of the PL3-face $\nabla_r$, is refined
see Fig. \ref{fig:3d6}.
 This is a final subtlety which 
is treated with the {\em bump}. \index{bump} This is characterized by a
non-convex pentagon shown in the bottom part Fig. \ref{fig:3d6}.
Let $\nu_j$ be $\frac{z_2+\omega_j}{2}$
and $\alpha_j$ as $\frac{\beta_{j-1}+\nu_j}{2}$, for $j=1,\ldots, i-1$. Observe that
if we define $\alpha_j$ as if the PL2$_0$-face
where not refined, some 1-simplices may cross.


\begin{figure}[!htb] 
\begin{center}
\includegraphics[scale=0.7]{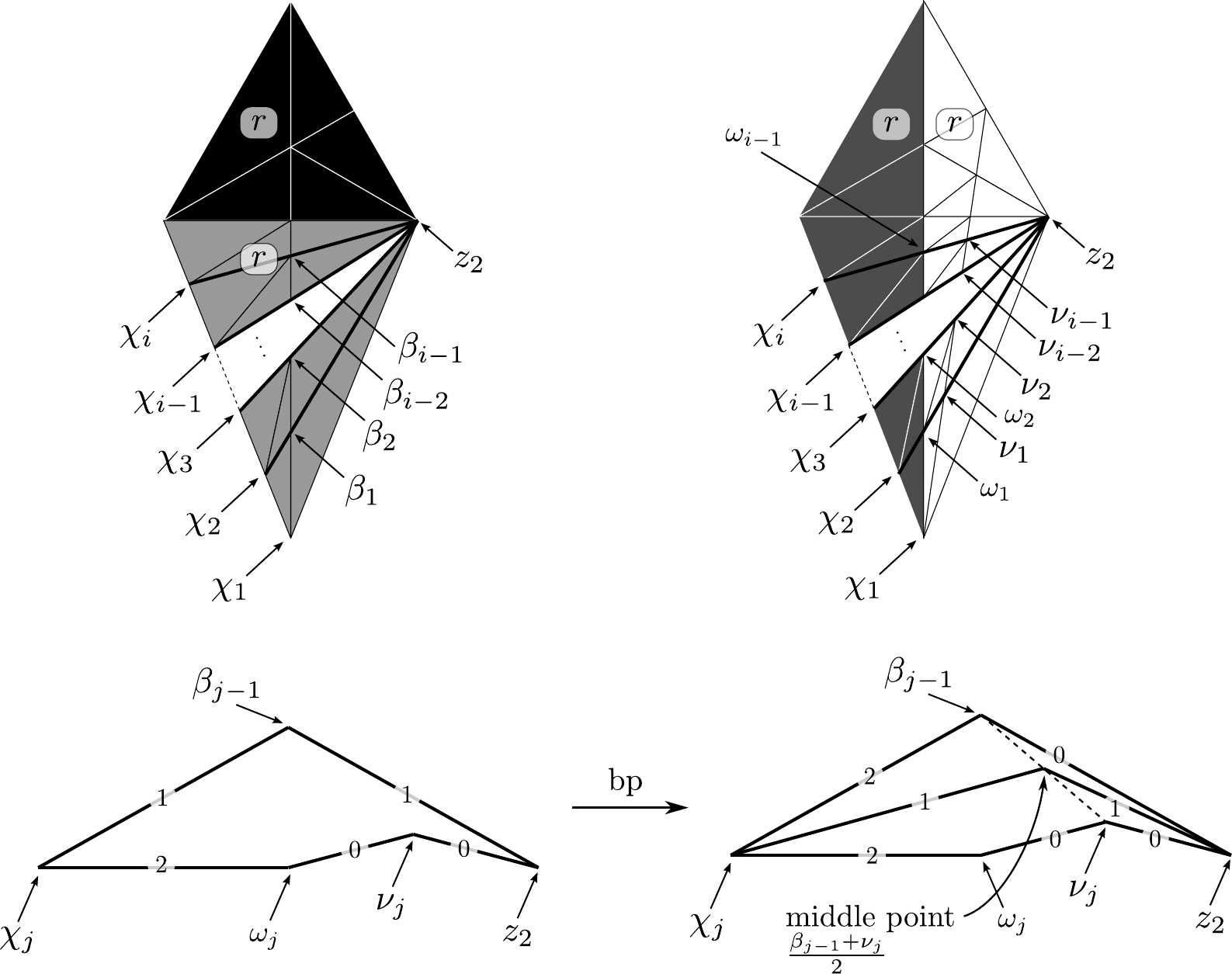}
\caption{The bump: a final subtlety and how to deal with it.}
\label{fig:3d6}
\end{center}
\end{figure}

\end{proof}

\subsection{Obtaining the framed Link}
Given $\mathcal{H}_{n}^\star$ into
$\mathbb{R}^3$ we obtain the $k$-component 
framed link (corresponding to the $k$ twistors) as a set of 
PL-triangulated $k$ cylinders in the 2-skeleton of 
$\mathcal{H}_{n}^\star$, named $\mathcal{C}_1,$ $\mathcal{C}_2, \ldots,\mathcal{C}_k$.
Observe that at this stage every 0-simplex of  $\bigcup_{j=1}^k \mathcal{C}_j$ 
has a 3-D coordinate attached to it. These cylinders are parametrized
as $k$ pairs of isometric rectangles (forming a strip) as in Fig. \ref{fig:strips}.
We draw $k$ straight horizontal lines at different heights of the rectangles, in the example,
lines $c_1-c_1$, $c_2-c_2$ and $c_3-c_3$. These lines are 
mapped into polygons in $\mathbb{R}^3$ which are PL-closed curves. 
The data we need is $\bigcup_{j=1}^k \mathcal{C}_j \subset \mathcal{H}_{n}^\star$
and we can discard the rest of $\mathcal{H}_{n}^\star$. The link that we seek 
is $\bigcup_{j=1}^k c_j$ with framing $\ell_j$, where $\ell_j$ is
given by the linking number of the two components of $\mathcal{C}_j$ oriented in
the same (arbitrary) direction.
We briefly review the definition of linking number 
(\cite {lickorish1997introduction}).
Consider two distinct components $K_1$ and $K_2$ of an oriented link projected into
the plane so that the crossings are transversal 
(no tangency) and that there are no triple points. 
The projection is also {\em decorated}
in the sense that at each crossing the upper and 
the lower strands are given, usually,
by omitting a small segment of the lower strand.
 The \index{linking number} {\em linking number of $\{K_1, K_2\}$
is half of the algebraic sum of the signs of the crossings 
between $K_1$ and $K_2$}, oriented 
in the same direction. If $G$ is a gem, $|G|$ means
the 3-manifold induced by $G$.
The link projection given in Fig. \ref{fig:projecoes2} 
which induce $|r^{24}_5|$ have its three linking numbers $-3$.


\begin{proposition}
\label{prop:totalnumberofsimplices}
 The number of 1-simplices in $\bigcup_{j=1}^k c_j$ is 
at most $12n^2$, where $2n$ is the number
of vertices of the input gem.
\end{proposition}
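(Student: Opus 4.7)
The plan is to bound the number of 1-simplices of each medial curve $c_j$ individually and then sum over the $k$ cylinders. Each cylinder $\mathcal{C}_j$ lies inside the 2-skeleton of $\mathcal{H}_n^\star$, and its triangulation is inherited from that 2-skeleton. The medial curve $c_j$ is the image of a horizontal segment in the parametrizing strip, so each 1-simplex of $c_j$ is obtained as the intersection of a ``vertical'' 1-simplex in the triangulation of $\mathcal{C}_j$ with the horizontal line at the prescribed height. Consequently, the count of 1-simplices in $c_j$ equals the number of transversal 1-simplices of the triangulated cylinder.

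First I would bound the number of twistors $k$ by a linear function of $n$. Since each twistor in the resolution corresponds to a bounded-cost feature of the input gem with $2n$ vertices, one obtains $k \le c_1 n$ for a small constant $c_1$. Second, I would bound the length (number of transversal 1-simplices) of a single medial curve $c_j$. Each cylinder $\mathcal{C}_j$ traverses a sequence of hinges, one per PL3-face/pillow it intersects, and the total number of pillows produced during the $n-1$ blow-ups of Theorem \ref{theo:teoremadeumalinha} is $O(n)$. After accounting for the refinements and the $\epsilon$-changes introduced when blowing up each balloon's tail (types $P_i$, $B_i$, $P_i'$, $B_i'$), each cylinder acquires at most a small constant number $c_2$ of vertical 1-simplices per pillow that it crosses, giving a per-cylinder bound of at most $c_2 n$ transversal 1-simplices.

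Multiplying the two estimates yields a bound of $c_1 c_2 n^2$ for the total number of 1-simplices in $\bigcup_{j=1}^k c_j$. With careful bookkeeping using Lemma \ref{lem:numberofedges} (the wings have at most $6n-4$ edges, so after the cone construction and the doubling of faces in the pillow each cylinder can be equipped with at most $12n$ vertical 1-simplices) and the observation that $k \le n$, the constants combine to give exactly the claimed $12n^2$ upper bound.

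The main obstacle is the determination of the precise numerical constant. Establishing the qualitative $O(n^2)$ bound is straightforward from the $O(n^2)$ complexity estimate already proved for the whole construction, but certifying the specific factor $12$ requires a careful accounting of how many PL2-faces each cylinder can meet after the refinements of the four tail types; in particular the bump construction (Fig. \ref{fig:3d6}) must be handled delicately so that it does not inflate the per-pillow count beyond the budget used in the estimate.
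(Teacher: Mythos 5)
Your proposal reproduces the overall shape of the paper's argument (bound the number of components by a linear function of $n$, bound the number of $1$-simplices per medial curve by a linear function of $n$, multiply), but it has two genuine gaps. First, and most importantly, the proposition asserts the explicit bound $12n^2$, not merely $O(n^2)$, and you concede that you cannot certify the constant. The missing ingredient is a concrete count of how finely a single PL2-face of the final complex can be subdivided: the paper observes that among the tail types the refined ones $B_i'$ and $P_i'$ are worst, with at most $3i-1$ two-simplices, and since $i \le 2n-1$ a single PL2-face of color $0$, $1$ or $2$ carries at most $6n-4$ two-simplices. Since each $1$-simplex of $c_j$ crosses each $2$-simplex at most once, this gives at most $12n-8$ crossings with the $0$-, $1$- and $2$-colored faces plus at most $4$ with the $3$-colored ones, hence at most $12n-4$ per component; together with $k\le n$ (a twistor uses two of the $2n$ vertices of the gem) this yields $12n^2$. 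Without some such explicit per-face count your argument cannot reach the stated constant.

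Second, your per-component accounting rests on an inaccurate geometric picture. You bound the length of $c_j$ by arguing that the cylinder ``traverses a sequence of $O(n)$ pillows, acquiring $O(1)$ transversal $1$-simplices per pillow.'' In fact each cylinder $\mathcal{C}_j$ is confined to the hinge dual to a single twistor: it is built from two strips, each consisting of a bounded number of PL2-faces in the boundary of the four PL3-faces (one per color) of that hinge. The curve $c_j$ therefore meets only a \emph{bounded} number of PL2-faces, and the linear growth in $n$ comes entirely from the fact that each such face may have been subdivided into up to $6n-4$ triangles by the successive blow-ups --- not from the curve visiting linearly many pillows. The two accountings happen to give the same order of magnitude, but yours does not correspond to the actual construction and would not survive the careful bookkeeping needed to pin down the factor $12$.
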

\begin{proof} 
 A $c_j$ crosses one PL2$_m$-face, for $m\in \{0,1,2,3\}$.
 It is easy to verify that the maximum number of 2-simplices in $B_i'$ or $ P'_i$ is
 $3i-1$ and this number exceeds similar numbers for $B_i$, $ P_i$, $R^b_i$ and $R^p_i$, for 
 $i \ge  1$. The maximum $i$ is $2n-1$, so the maximum number of 2-simplices 0-, 1- and 2-colored in one PL2-face of the 
 final complex is $6n-4$. Each 1-simplex of $c_j$ crosses at most once each 2-simplex.
 Therefore, the number of 1-simpices crossing a 2-simplex 0-, 1- and 2-colored is at most $12n-8$. 
 A $c_j$ crosses at most four 2-simplices 3-colored. The result follows because $n$ is an upper 
 bound for $k$, number of components. Just note that a component in the link
 is in 1-1 correspondence with the twistors of the original gem and a twistor 
is formed by 2 vertices. This proof is partially ilustrated in Fig. \ref{fig:strips}.
 The illustration is not faithful because we can replace the strips at the right by
their bottom parts, getting simpler cylinders homotopic to the ones illustrated.
\end{proof}

Each cylinder is formed by two strips. Each strip by two adequate
pairs of two PL2-faces in the boundary of the PL3-faces in the hinge.
In the way depicted we have one PL3-face of each color $c$, $c=0,1,2,3$.
The number of  crossings of the curves $c_j$, $j=1,2,3$ coincides with the
number of $1$-simplices (or $0$-simplices) of $\bigcup_{j=1}^3 \mathcal{C}_j$.
To decrease this number we can replace the part of the strip which does not use a
PL2$_3$-face by the two complementary PL2$_3$-faces in the corresponding PL3-face.
This produces isotopic cylinders, but the number of crossings of the $c_j$'s are smaller. 
Note that each PL2$_3$-face has just five 2-simplices. In Fig. \ref{fig:strips} we 
depict the situation for the wings arising from $r_5^{24}$ before the 3 replacements.

\begin{figure}[!htb]
\begin{center}
\includegraphics[width=15cm]{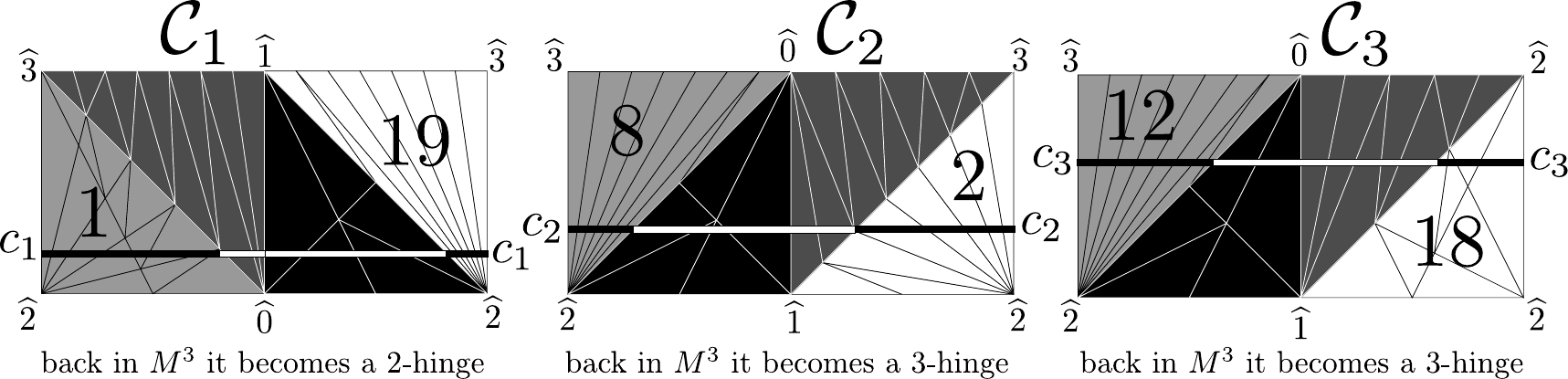}
\caption{From hinges to cylinders 
$\mathcal{C}_j$  to curves  $c_j$ (example inducing  $|r^{24}_5|$).
}  
\label{fig:strips}
\end{center}
\end{figure}

\begin{figure}[!htb]
\begin{center}
\includegraphics[scale=0.9]{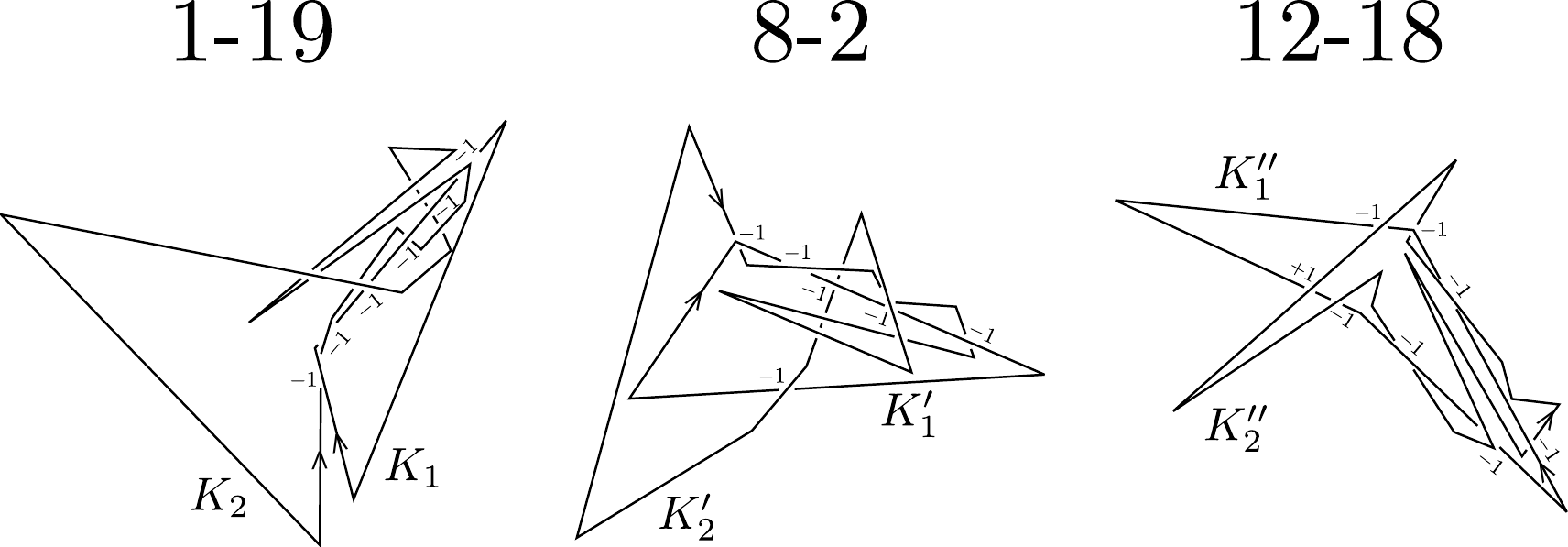}
\caption{Projection of the algorithm's output, yielding 
the linking numbers of the 
boundary components of the embedded cylinders of Fig. \ref{fig:strips}.}
\label{fig:projecoes2}
\end{center}
\end{figure}

\subsection{Obtaining a Gauss code for the link}

At this point, we have the link as a set of cyclic sequences of points in $\mathbb{R}^3$.
We also have the framing of each component. Thus the theoretical problem is solved. 
However, is convenient to go on getting adequate planar projections to produce 
planar diagram for the link.
We obtain the following Gauss code, 
(\cite{rosenstiehl1976solution}, chapter 3 of \cite{Lins1980}, 
\cite{lins2008hsg}),
where signs mean
up $(+)$ and down $(-)$ passages:
$ ((-2,+3, -4,+1), (-5, +6, +2, -1),$ $(-3, +4, -7, -6, +5, +7)).$
From this code we get the link planar diagram of 
Figs. \ref{fig:r24_5link2}.
Since we have the framings curls
can be removed. An explicit elegant framed link inducing the euclidean 
3-manifold $|r^{24}_5|$ was previously unknown. We use 
Fig. \ref{fig:r24_5link} as input for L. Lins's software \cite{lins2007blink} to obtain
the WRT-invariants from $r=3$ to $r=20$ for the space $|r^{24}_5|$. We also apply our algorithm
for the Weber-Seifert hyperbolic dodecahedron space, obtaining a link with
142 crossings, included in Appendix B.

\begin{figure}[!htb]
\begin{center}
\includegraphics[width=13cm]{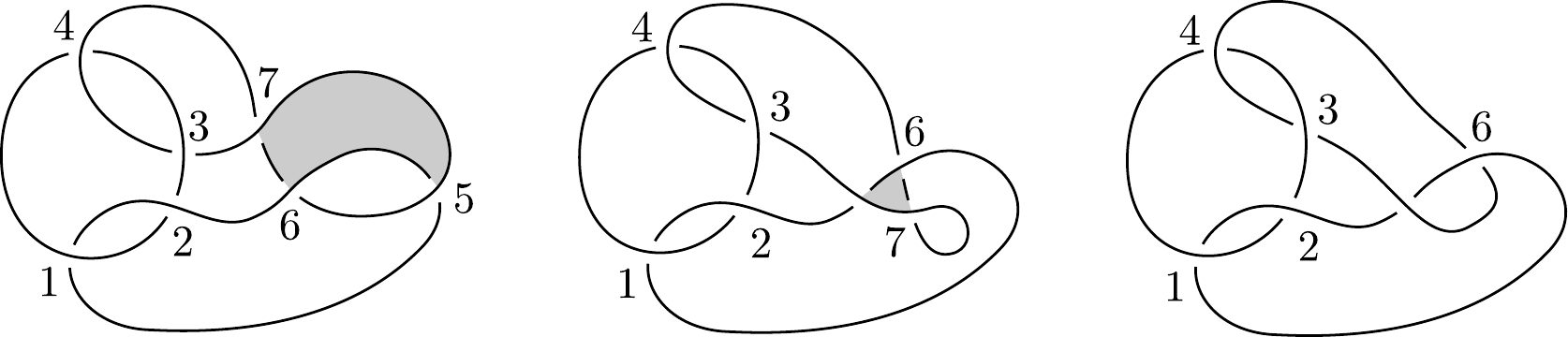}
\caption{Simplifying the link diagram for $|r_5^{24}|$.}
\label{fig:r24_5link2}
\end{center}
\end{figure}

\begin{figure}[!htb]
\begin{center}
\includegraphics[width=13cm]{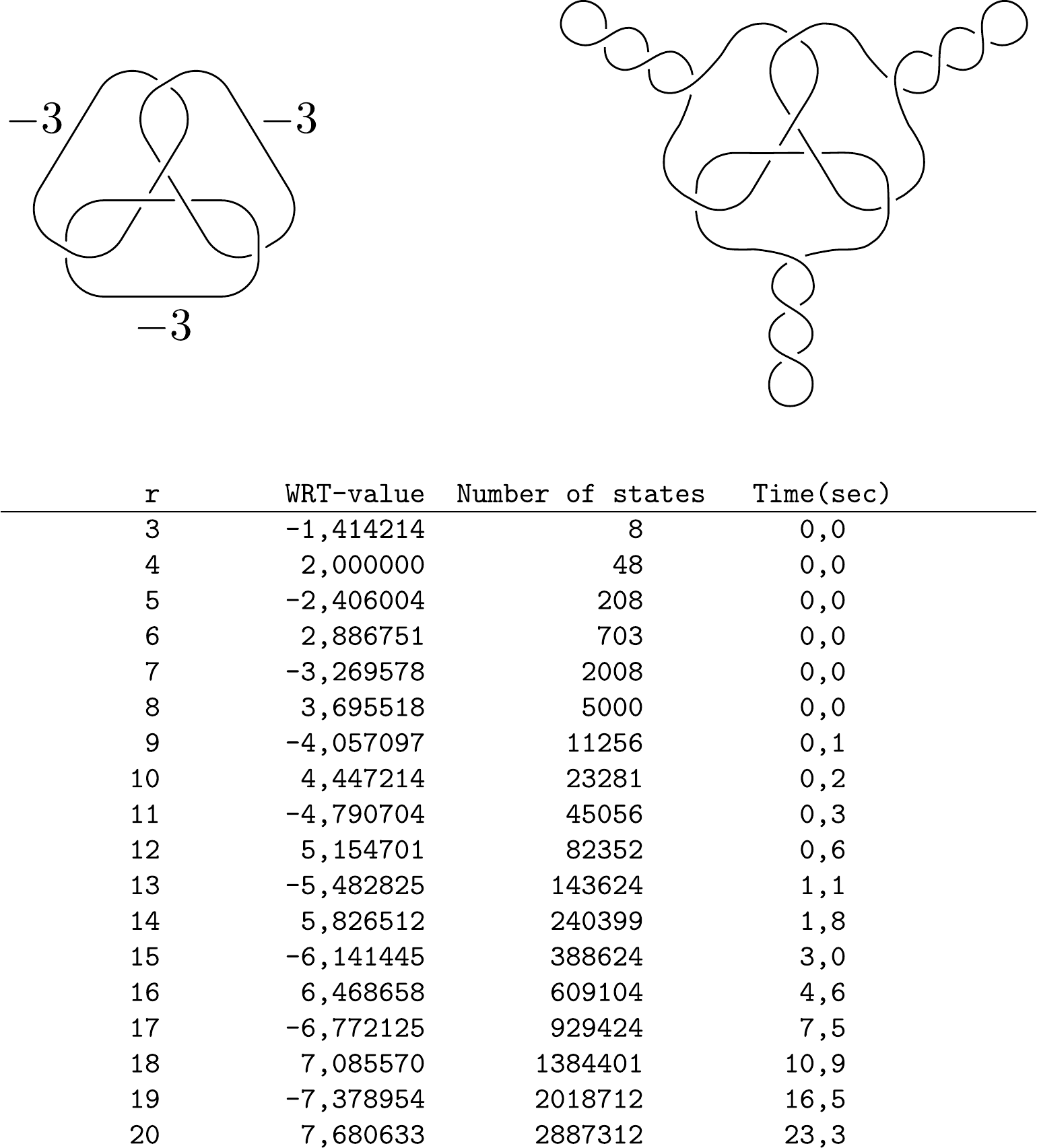}
\caption{Framed link, blackboard framed link and WRT-invariants for $|r_5^{24}|$.
 Data obtained from L. Lins software \cite{lins2007blink}.}
\label{fig:r24_5link}
\end{center}
\end{figure}


\newpage
\begin{algorithm}
\label{theo:algorithm}
 There exists an $O(n^2)$-algorithm to produce, from a
  resoluble gem inducing an $M^3$,  a blackboard framed link also inducing $M^3$.
\end{algorithm}
\begin{proof} 
We start with a resoluble gem $G$ with $2n$ vertices. Here is the algorithm,
justified by the theory previously developed:
\begin{itemize}
 \item Form decreasing sequence of gems starting with $\mathcal{J}^2$,
 the $J^2$-gem associated with
the resolution of $G$ performing adequate $0$- or $1$-flips
 and finishing at the bloboid $\mathcal{B}_1$:
$\mathcal{J}^2=\mathcal{H}_{n},  \mathcal{H}_{n-1}, \ldots, \mathcal{H}_1=\mathcal{B}_1$.
 
 \item Form sequence of balloons $\mathcal{B}_1^\star,\ldots, \mathcal{B}_{n-1}^\star$
and pillows $\mathcal{P}_2^\star,\ldots, \mathcal{P}_n^\star$ defining implicitly the 
  sequence of combinatorial $2$-complexes $\mathcal{H}_1^\star, \mathcal{H}_1^\star, 
\ldots, \mathcal{H}_{2n}^\star$, together with 
their respective wings and nervures (combinatorially   
given by rotations), 
$\mathcal{W}_1^\ell \cup \mathcal{N}_1^\ell$, 
$\mathcal{W}_2^\ell \cup \mathcal{N}_2^\ell$,
\ldots, $\mathcal{W}_n^\ell \cup \mathcal{N}_n^\ell$ and 
$\mathcal{W}_1^r \cup \mathcal{N}_1^r$, 
$\mathcal{W}_2^r \cup \mathcal{N}_2^r$,
\ldots, $\mathcal{W}_n^r \cup \mathcal{N}_n^r$.
Each $bp$-move is simply a flip in the
primal sequence. See Figs. \ref{fig:winglist01} to \ref{fig:winglist11}
from the $r^{24}_5$-example.

\item Use Tutte's barycentric method with the edge weight 
heuristic (Fig. \ref{fig:arvorecompesos} from the $r^{24}_5$-example) 
to provide rectilinear embeddings
of $\mathcal{W}_n^\ell \cup \mathcal{N}_n^\ell$ in $\Pi_\ell$ and
of $\mathcal{W}_n^r \cup \mathcal{N}_n^r$ in $\Pi_r$ fixing the outer regions.
The nervures
are useful up to this point, and after obtaining the rectilinear embeddings
they can be discarded.

\item Get $\mathcal{H}_{1}^\diamond$ using
$\mathcal{W}_n^\ell \cup \mathcal{W}_n^r$ 
by the cone construction.

\item Get the sequence $\mathcal{H}_{2}^\diamond, \ldots, \mathcal{H}_{n}^\diamond = \mathcal{H}_n^\star$
by the blowing up technique in the proof of Theorem \ref{theo:teoremadeumalinha}.

\item Define the framings of the components of the link
as the linking numbers of the boundaries of the cylinders formed by
the strips coming from the hinges; special care: distinguish the
hinges which become 2-hinges from the
hinges which become 3-hinges in $M^3$. See Fig. \ref{fig:strips}.

\item Find an adequate projection of suitable medial curves in the cylinders.
These curves form the link. Find Gauss code for the link, and so,
a projection is combinatorially specified (\cite{rosenstiehl1976solution}, 
chapter 3 of \cite{Lins1980}, 
\cite{lins2008hsg}). Add curls to produce
a blackboard framed link. See Figs. \ref{fig:r24_5link2} and \ref{fig:r24_5link}
from the $r^{24}_5$-example.
\end{itemize}
This algorithm has both space complexity and time complexity $O(n^2)$.
Its output, first obtained as a set with no more than $n$ 
PL-polygons in $\mathbb{R}^3$,  
has a total of at most $12n^2$ vertices.
\end{proof}












-----------




\section*{Appendix A: a solution for the 
Weber-Seifert Dodecahedral Hyperbolic Space}

We found a projection for a framed link with 142 crossings for the 
Weber-Seifert Dodecahedral Hyperbolic Space. Actually the PL-link are nine PL-polygons in
$\mathbb{R}^3$ with a total of only 68 vertices.
The data that follows is a positive answer for Jeffrey Weeks' question more than
twenty years ago. Still in raw form, it can be substantially simplified

We apply our algorithm to the 50-vertex gem 
and its resolution given at the right side of Fig. 
\ref{fig:resolutionDhip50A}. 

\begin{figure}[!htb]
\begin{center}
\includegraphics[width=14.5cm]{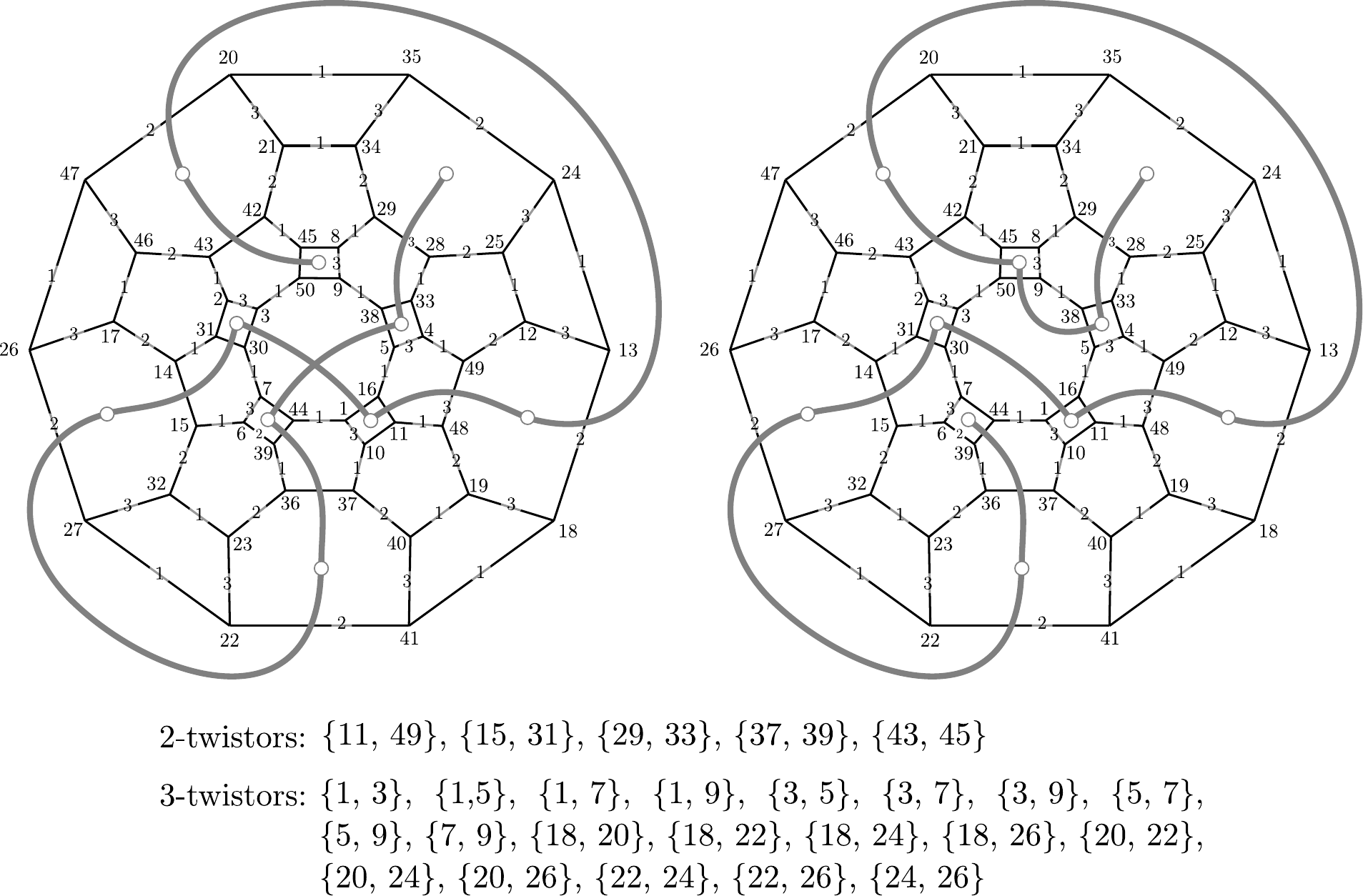} \\
\caption{\sf This is a 50-vertex gem which behaves as the {\em attractor}.}
\label{fig:resolutionDhip50A}
\end{center}
\end{figure}

A crossing $x\in \mathbb{N}$ has 4 legs in counterclockwise 
order: $(4x-3, 4x-2, 4x-1, 4x)$.
A {\em duet} \index{duet} is a perfect matching of the legs.
The first entry of a \index{quintet} {\em quintet} is the number of the crossing.
Each crossing appears in two consecutive quintets. The second entry is $d$ 
or $u$ depending on whether the quintet holds the first or the second occurrence
of its crossing. The $d$ means that the southwest to the northeast passage
goes under, the $u$ means that it goes over. The third and fourth entries of a quintet
are legs and their order specifies a consistent orientation for all the components
of the link. The fifth and last entry of a quintet is the number of the component
of the link that contains the two legs. By properly embedding the quintets in the plane
and identifying the legs as specified by the duets we have a link diagram
with consistent orientation of all of its component. Thus to obtain a Gauss code
(\cite{rosenstiehl1976solution}, chapter 3 of \cite{Lins1980}, 
\cite{lins2008hsg}), for the link is straightforward. 
Even though 
there are 142 crossings in the projection, the number of 1-simplices
in the PL-link is only 68. This was obtained by a shortcutting technique 
which started with over two hundred 1-simplices:
each 0-simplex defines a triangle in $\mathbb{R}^3$; if this triangle
is not pierced by a 1-simplex, then the 0-simplex is removed from the link. 
Compare 68 with our theoretical 
bound, namely $12n^2=7500$, since $n=25$. We emphasize the issue that
the algorithm behaves very efficiently.

\subsection{Duets of $DHIP_{50}^{142}$}
{\scriptsize
\begin{center}
$
\begin{array}{|c|c|c|c|c|c|c|c|} \hline
 1, 8 &
 2, 155 &
 3, 112 &
 4, 111 &
 5, 108 &
 6, 11 &
 7, 104 &
 9, 16 \\
 10, 101&
 12, 107&
 13, 564&
 14, 19&
 15, 102&
 17, 24&
 18, 97&
 20, 159 \\
 21, 158&
 22, 27&
 23, 468&
 25, 32&
 26, 465&
 28, 163&
 29, 340&
 30, 35 \\
 31, 336&
 33, 40&
 34, 267&
 36, 271&
 37, 560&
 38, 43&
 39, 268&
 41, 48 \\
 42, 75&
 44, 559&
 45, 72&
 46, 51&
 47, 76&
 49, 56&
 50, 495&
 52, 71 \\
 53, 68&
 54, 59&
 55, 188&
 57, 62&
 58, 185&
 60, 63&
 61, 500&
 64, 67 \\
 65, 70&
 66, 183&
 69, 184&
 73, 494&
 74, 79&
 77, 84&
 78, 263&
 80, 259 \\
 81, 258&
 82, 85&
 83, 26&
 86, 475&
 87, 92&
 88, 335&
 89, 472&
 90, 93 \\
 91, 466&
 94, 471&
 95, 98&
 96, 467&
 99, 156&
 100, 103&
 105, 110&
 106, 319 \\
 109, 320&
 113, 240&
 114, 239&
 115, 526&
 116, 117&
 118, 525&
 119, 122&
 120, 235 \\
 121, 236&
 123, 142&
 124, 127&
 125, 132&
 126, 531&
 128, 139&
 129, 136&
 130, 135 \\
 131, 532&
 133, 138&
 134, 317&
 137, 230&
 140, 141&
 143, 146&
 144, 231&
 145, 524 \\
 147, 528&
 148, 149&
 150, 527&
 151, 154&
 152, 383&
 153, 244&
 157, 164&
 160, 563 \\
 161, 464&
 162, 167&
 165, 172&
 166, 337&
 168, 463&
 169, 344&
 170, 175&
 171, 338 \\
 173, 180&
 174, 269&
 176, 275&
 177, 556&
 178, 181&
 179, 558&
 182, 497&
 186, 191 \\
 187, 496&
 189, 196&
 190, 489&
 192, 427&
 193, 432&
 194, 199&
 195, 488&
 197, 204 \\
 198, 435&
 200, 431&
 201, 448&
 202, 207&
 203, 544&
 205, 212&
 206, 443&
 208, 447 \\
 209, 446&
 210, 215&
 211, 538&
 213, 220&
 214, 483&
 216, 371&
 217, 370&
 218, 223 \\
 219, 376&
 221, 226&
 222, 373&
 224, 311&
 225, 522&
 227, 316&
 228, 229&
 232, 523 \\
 233, 530&
 234, 237&
 238, 529&
 241, 470&
 242, 247&
 243, 324&
 245, 252&
 246, 321 \\
 248, 327&
 249, 326&
 250, 253&
 251, 568&
 254, 329&
 255, 260&
 256, 479&
 257, 332 \\
 261, 334&
 262, 265&
 266, 333&
 270, 557&
 272, 339&
 273, 280&
 274, 553&
 276, 507 \\
 277, 506&
 278, 283&
 279, 416&
 281, 288&
 282, 413&
 284, 411&
 285, 552&
 286, 291 \\
 287, 504&
 289, 296&
 290, 295&
 292, 359&
 293, 364&
 294, 299&
 297, 304&
 298, 365 \\
 300, 363&
 301, 452&
 302, 305&
 303, 366&
 306, 451&
 307, 312&
 308, 369&
 309, 396 \\
 310, 313&
 314, 395&
 315, 318&
 322, 567&
 323, 392&
 325, 330&
 328, 473&
 331, 476 \\
 341, 404&
 342, 347&
 343, 408&
 345, 352&
 346, 511&
 348, 403&
 349, 516&
 350, 353 \\
 351, 512&
 354, 515&
 355, 360&
 356, 551&
 357, 514&
 358, 361&
 362, 519&
 367, 372 \\
 368, 445&
 374, 379&
 375, 484&
 377, 382&
 378, 535&
 380, 521&
 381, 536&
 384, 387 \\
 385, 390&
 386, 565&
 388, 391&
 389, 566&
 393, 456&
 394, 399&
 397, 402&
 398, 459 \\
 400, 455&
 401, 460&
 405, 510&
 406, 409&
 407, 508&
 410, 549&
 412, 505&
 414, 503 \\
 415, 420&
 417, 502&
 418, 421&
 419, 554&
 422, 501&
 423, 428&
 424, 499&
 425, 430 \\
 426, 429&
 433, 440&
 434, 485&
 436, 543&
 437, 542&
 438, 441&
 439, 546&
 442, 541 \\
 444, 537&
 449, 518&
 450, 453&
 454, 517&
 457, 462&
 458, 561&
 461, 562&
 469, 474 \\
 477, 482&
 478, 533&
 480, 539&
 481, 534&
 486, 545&
 487, 490&
 491, 548&
 492, 493 \\
 498, 555&
 509, 550&
 513, 520&
 540, 547 & & & & \\ \hline
\end{array}
$
\end{center}
}


\subsection{Quintets, cylinders and framings of $DHIP_{50}^{142}$}

{\tiny
\begin{center}
$
\begin{array}{cccccc}
 \{1,\text{d},3,1,1\} & \{1,\text{u},4,2,3\} &
 \{2,\text{d},7,5,8\} & \{2,\text{u},8,6,1\} &
 \{3,\text{d},11,9,1\} & \{3,\text{u},10,12,1\} \\
 \{4,\text{d},13,15,8\} & \{4,\text{u},16,14,1\} &
 \{5,\text{d},18,20,2\} & \{5,\text{u},19,17,1\} &
 \{6,\text{d},24,22,1\} & \{6,\text{u},21,23,9\} \\
 \{7,\text{d},27,25,1\} & \{7,\text{u},28,26,5\} &
 \{8,\text{d},32,30,1\} & \{8,\text{u},31,29,4\} &
 \{9,\text{d},35,33,1\} & \{9,\text{u},34,36,3\} \\
 \{10,\text{d},39,37,9\} & \{10,\text{u},40,38,1\} &
 \{11,\text{d},42,44,8\} & \{11,\text{u},43,41,1\} &
 \{12,\text{d},48,46,1\} & \{12,\text{u},45,47,1\} \\
 \{13,\text{d},51,49,1\} & \{13,\text{u},50,52,6\} &
 \{14,\text{d},53,55,2\} & \{14,\text{u},56,54,1\} &
 \{15,\text{d},59,57,1\} & \{15,\text{u},60,58,8\} \\
 \{16,\text{d},61,63,8\} & \{16,\text{u},62,64,1\} &
 \{17,\text{d},66,68,2\} & \{17,\text{u},67,65,1\} &
 \{18,\text{d},70,72,1\} & \{18,\text{u},71,69,6\} \\
 \{19,\text{d},73,75,8\} & \{19,\text{u},76,74,1\} &
 \{20,\text{d},80,78,4\} & \{20,\text{u},79,77,1\} &
 \{21,\text{d},84,82,1\} & \{21,\text{u},81,83,3\} \\
 \{22,\text{d},86,88,9\} & \{22,\text{u},85,87,1\} &
 \{23,\text{d},91,89,9\} & \{23,\text{u},92,90,1\} &
 \{24,\text{d},93,95,1\} & \{24,\text{u},96,94,5\} \\
 \{25,\text{d},99,97,2\} & \{25,\text{u},98,100,1\} &
 \{26,\text{d},102,104,8\} & \{26,\text{u},103,101,1\} &
 \{27,\text{d},108,106,8\} & \{27,\text{u},107,105,1\} \\
 \{28,\text{d},109,111,3\} & \{28,\text{u},110,112,1\} &
 \{29,\text{d},115,113,6\} & \{29,\text{u},114,116,2\} &
 \{30,\text{d},120,118,8\} & \{30,\text{u},117,119,2\} \\
 \{31,\text{d},123,121,2\} & \{31,\text{u},122,124,2\} &
 \{32,\text{d},126,128,6\} & \{32,\text{u},127,125,2\} &
 \{33,\text{d},129,131,8\} & \{33,\text{u},132,130,2\} \\
 \{34,\text{d},135,133,2\} & \{34,\text{u},134,136,8\} &
 \{35,\text{d},138,140,2\} & \{35,\text{u},139,137,6\} &
 \{36,\text{d},141,143,2\} & \{36,\text{u},144,142,2\} \\
 \{37,\text{d},146,148,2\} & \{37,\text{u},145,147,6\} &
 \{38,\text{d},149,151,2\} & \{38,\text{u},150,152,8\} &
 \{39,\text{d},154,156,2\} & \{39,\text{u},155,153,3\} \\
 \{40,\text{d},159,157,2\} & \{40,\text{u},160,158,9\} &
 \{41,\text{d},164,162,2\} & \{41,\text{u},161,163,5\} &
 \{42,\text{d},167,165,2\} & \{42,\text{u},166,168,8\} \\
 \{43,\text{d},172,170,2\} & \{43,\text{u},171,169,4\} &
 \{44,\text{d},175,173,2\} & \{44,\text{u},174,176,3\} &
 \{45,\text{d},179,177,9\} & \{45,\text{u},180,178,2\} \\
 \{46,\text{d},181,183,2\} & \{46,\text{u},184,182,6\} &
 \{47,\text{d},188,186,2\} & \{47,\text{u},185,187,8\} &
 \{48,\text{d},192,190,7\} & \{48,\text{u},191,189,2\} \\
 \{49,\text{d},196,194,2\} & \{49,\text{u},195,193,7\} &
 \{50,\text{d},200,198,5\} & \{50,\text{u},199,197,2\} &
 \{51,\text{d},204,202,2\} & \{51,\text{u},203,201,7\} \\
 \{52,\text{d},207,205,2\} & \{52,\text{u},206,208,5\} &
 \{53,\text{d},209,211,7\} & \{53,\text{u},212,210,2\} &
 \{54,\text{d},214,216,8\} & \{54,\text{u},215,213,2\} \\
 \{55,\text{d},217,219,4\} & \{55,\text{u},220,218,2\} &
 \{56,\text{d},222,224,5\} & \{56,\text{u},223,221,2\} &
 \{57,\text{d},226,228,2\} & \{57,\text{u},227,225,6\} \\
 \{58,\text{d},230,232,6\} & \{58,\text{u},229,231,2\} &
 \{59,\text{d},233,235,8\} & \{59,\text{u},236,234,2\} &
 \{60,\text{d},237,239,2\} & \{60,\text{u},240,238,6\} \\
 \{61,\text{d},244,242,3\} & \{61,\text{u},241,243,9\} &
 \{62,\text{d},247,245,3\} & \{62,\text{u},246,248,4\} &
 \{63,\text{d},251,249,9\} & \{63,\text{u},252,250,3\} \\
 \{64,\text{d},253,255,3\} & \{64,\text{u},254,256,5\} &
 \{65,\text{d},257,259,4\} & \{65,\text{u},260,258,3\} &
 \{66,\text{d},263,261,4\} & \{66,\text{u},264,262,3\} \\
 \{67,\text{d},266,268,9\} & \{67,\text{u},265,267,3\} &
 \{68,\text{d},270,272,8\} & \{68,\text{u},271,269,3\} &
 \{69,\text{d},276,274,8\} & \{69,\text{u},275,273,3\} \\
 \{70,\text{d},279,277,6\} & \{70,\text{u},280,278,3\} &
 \{71,\text{d},284,282,5\} & \{71,\text{u},283,281,3\} &
 \{72,\text{d},287,285,9\} & \{72,\text{u},288,286,3\} \\
 \{73,\text{d},292,290,6\} & \{73,\text{u},291,289,3\} &
 \{74,\text{d},296,294,3\} & \{74,\text{u},295,293,6\} &
 \{75,\text{d},299,297,3\} & \{75,\text{u},300,298,4\} \\
 \{76,\text{d},304,302,3\} & \{76,\text{u},303,301,5\} &
 \{77,\text{d},305,307,3\} & \{77,\text{u},308,306,8\} &
 \{78,\text{d},312,310,3\} & \{78,\text{u},311,309,5\} \\
 \{79,\text{d},313,315,3\} & \{79,\text{u},314,316,6\} &
 \{80,\text{d},319,317,8\} & \{80,\text{u},318,320,3\} &
 \{81,\text{d},324,322,9\} & \{81,\text{u},323,321,4\} \\
 \{82,\text{d},326,328,9\} & \{82,\text{u},327,325,4\} &
 \{83,\text{d},330,332,4\} & \{83,\text{u},331,329,5\} &
 \{84,\text{d},335,333,9\} & \{84,\text{u},334,336,4\} \\
 \{85,\text{d},339,337,8\} & \{85,\text{u},340,338,4\} &
 \{86,\text{d},344,342,4\} & \{86,\text{u},341,343,5\} &
 \{87,\text{d},347,345,4\} & \{87,\text{u},346,348,9\} \\
 \{88,\text{d},351,349,6\} & \{88,\text{u},352,350,4\} &
 \{89,\text{d},353,355,4\} & \{89,\text{u},354,356,8\} &
 \{90,\text{d},357,359,6\} & \{90,\text{u},360,358,4\} \\
 \{91,\text{d},364,362,6\} & \{91,\text{u},361,363,4\} &
 \{92,\text{d},365,367,4\} & \{92,\text{u},368,366,5\} &
 \{93,\text{d},372,370,4\} & \{93,\text{u},371,369,8\} \\
 \{94,\text{d},376,374,4\} & \{94,\text{u},375,373,5\} &
 \{95,\text{d},379,377,4\} & \{95,\text{u},380,378,6\} &
 \{96,\text{d},382,384,4\} & \{96,\text{u},383,381,8\} \\
 \{97,\text{d},387,385,4\} & \{97,\text{u},386,388,9\} &
 \{98,\text{d},391,389,9\} & \{98,\text{u},390,392,4\} &
 \{99,\text{d},393,395,6\} & \{99,\text{u},396,394,5\} \\
 \{100,\text{d},399,397,5\} & \{100,\text{u},400,398,5\} &
 \{101,\text{d},403,401,9\} & \{101,\text{u},402,404,5\} &
 \{102,\text{d},407,405,6\} & \{102,\text{u},408,406,5\} \\
 \{103,\text{d},410,412,8\} & \{103,\text{u},409,411,5\} &
 \{104,\text{d},414,416,6\} & \{104,\text{u},413,415,5\} &
 \{105,\text{d},419,417,9\} & \{105,\text{u},420,418,5\} \\
 \{106,\text{d},421,423,5\} & \{106,\text{u},424,422,6\} &
 \{107,\text{d},425,427,7\} & \{107,\text{u},428,426,5\} &
 \{108,\text{d},429,431,5\} & \{108,\text{u},432,430,7\} \\
 \{109,\text{d},435,433,5\} & \{109,\text{u},436,434,6\} &
 \{110,\text{d},439,437,7\} & \{110,\text{u},440,438,5\} &
 \{111,\text{d},441,443,5\} & \{111,\text{u},444,442,6\} \\
 \{112,\text{d},448,446,7\} & \{112,\text{u},447,445,5\} &
 \{113,\text{d},451,449,8\} & \{113,\text{u},452,450,5\} &
 \{114,\text{d},454,456,6\} & \{114,\text{u},453,455,5\} \\
 \{115,\text{d},460,458,9\} & \{115,\text{u},459,457,5\} &
 \{116,\text{d},463,461,8\} & \{116,\text{u},462,464,5\} &
 \{117,\text{d},468,466,9\} & \{117,\text{u},465,467,5\} \\
 \{118,\text{d},472,470,9\} & \{118,\text{u},471,469,5\} &
 \{119,\text{d},473,475,9\} & \{119,\text{u},474,476,5\} &
 \{120,\text{d},479,477,5\} & \{120,\text{u},478,480,6\} \\
 \{121,\text{d},482,484,5\} & \{121,\text{u},481,483,8\} &
 \{122,\text{d},486,488,7\} & \{122,\text{u},485,487,6\} &
 \{123,\text{d},489,491,7\} & \{123,\text{u},490,492,6\} \\
 \{124,\text{d},496,494,8\} & \{124,\text{u},493,495,6\} &
 \{125,\text{d},498,500,8\} & \{125,\text{u},497,499,6\} &
 \{126,\text{d},501,503,6\} & \{126,\text{u},502,504,9\} \\
 \{127,\text{d},506,508,6\} & \{127,\text{u},505,507,8\} &
 \{128,\text{d},510,512,6\} & \{128,\text{u},509,511,9\} &
 \{129,\text{d},516,514,6\} & \{129,\text{u},513,515,8\} \\
 \{130,\text{d},519,517,6\} & \{130,\text{u},518,520,8\} &
 \{131,\text{d},523,521,6\} & \{131,\text{u},522,524,6\} &
 \{132,\text{d},525,527,8\} & \{132,\text{u},528,526,6\} \\
 \{133,\text{d},532,530,8\} & \{133,\text{u},529,531,6\} &
 \{134,\text{d},536,534,8\} & \{134,\text{u},535,533,6\} &
 \{135,\text{d},538,540,7\} & \{135,\text{u},539,537,6\} \\
 \{136,\text{d},542,544,7\} & \{136,\text{u},541,543,6\} &
 \{137,\text{d},548,546,7\} & \{137,\text{u},547,545,7\} &
 \{138,\text{d},551,549,8\} & \{138,\text{u},552,550,9\} \\
 \{139,\text{d},556,554,9\} & \{139,\text{u},553,555,8\} &
 \{140,\text{d},560,558,9\} & \{140,\text{u},559,557,8\} &
 \{141,\text{d},562,564,8\} & \{141,\text{u},561,563,9\} \\
 \{142,\text{d},566,568,9\} & \{142,\text{u},567,565,9\} &
\end{array}
$
\end{center}
}

\begin{figure}[!htb]
\begin{center}
\includegraphics[width=5.8cm]{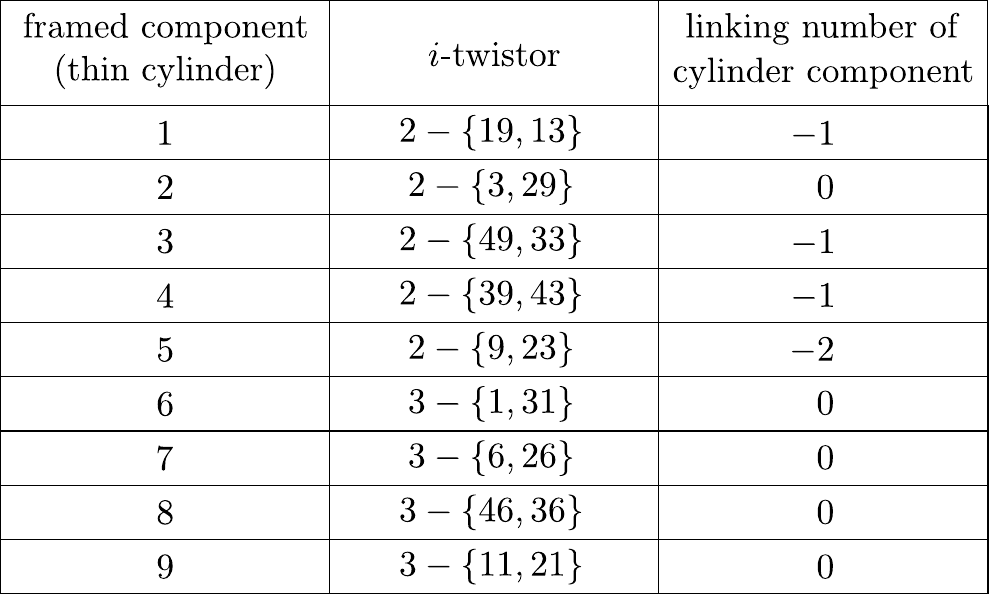}
\end{center}
\end{figure}

\section*{Appendix B: all figures for the $r_5^{24}$-example. This produces
an overview of the data structure and their interrelations illustrating the general case
of the algorithm}

In the following figures, the notation
 $\nabla_{v,i+1}$, $i\geq1$, denotes the PL3-face dual of the
vertex $v$ of the input gem obtained after the $i$-th $bp$-move is performed. If after the $i$-th
$bp$-move $\nabla_{v,i}$ does not change, then $\nabla_{v,i+1}=\nabla_{v,i}$. When there is a change,
it is an $\epsilon$-change.

\begin{figure}[!htb]
\begin{center}
\includegraphics[width=15cm]{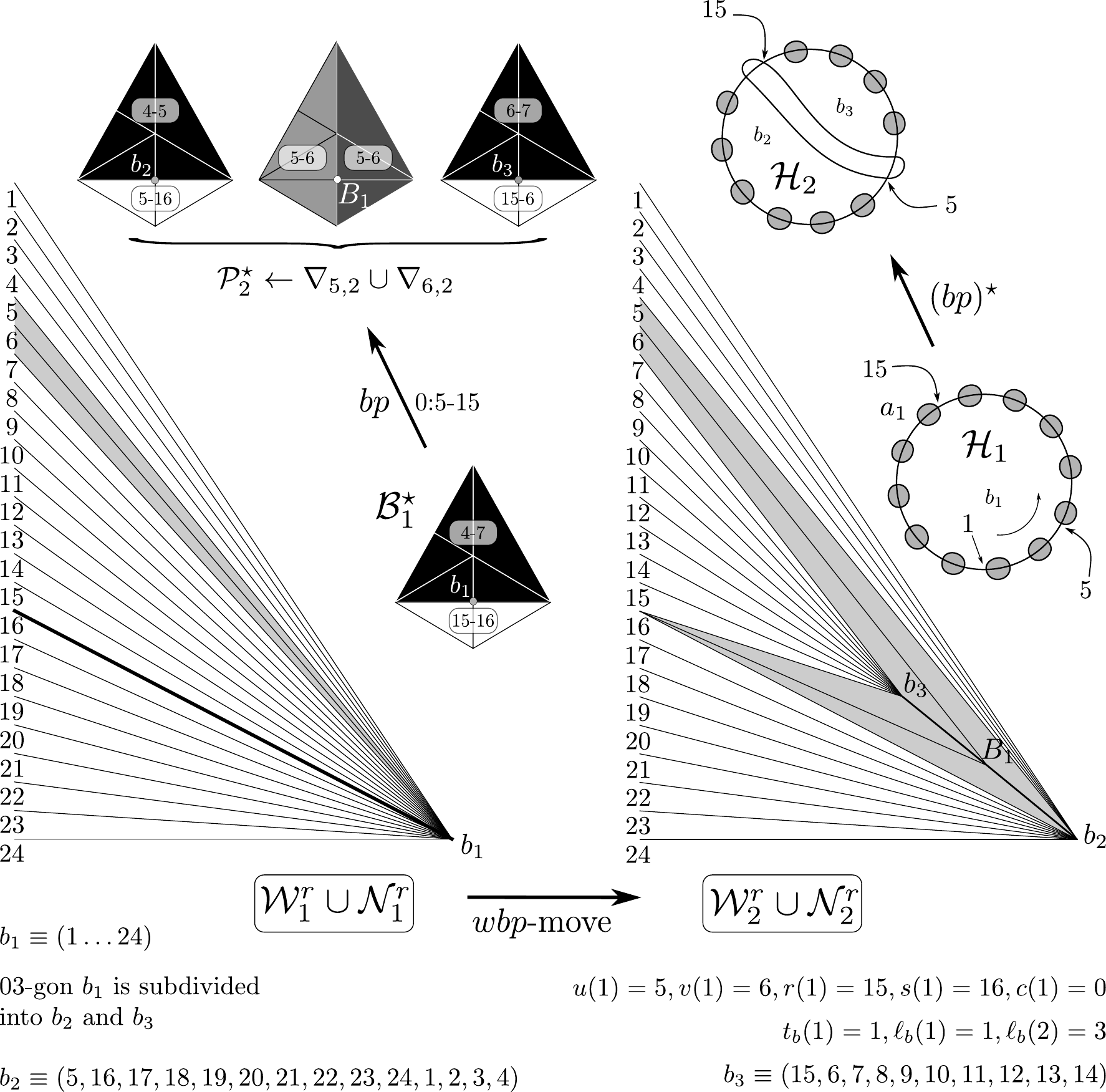} \\
\caption{\sf 
$\mathcal{H}^\star_{2} \leftarrow \mathcal{H}^\star_1 
\cup (\mathcal{P}_{2}^\star \backslash \mathcal{B}_1^\star)$. 
Pillow $\mathcal{P}_{2}^\star \leftarrow 
\nabla_{5,12}\cup \nabla_{6,12}$
($r^{24}_5$-example).}
\label{fig:winglist01}
\end{center}
\end{figure}

\begin{figure}
\begin{center}
\includegraphics[width=15cm]{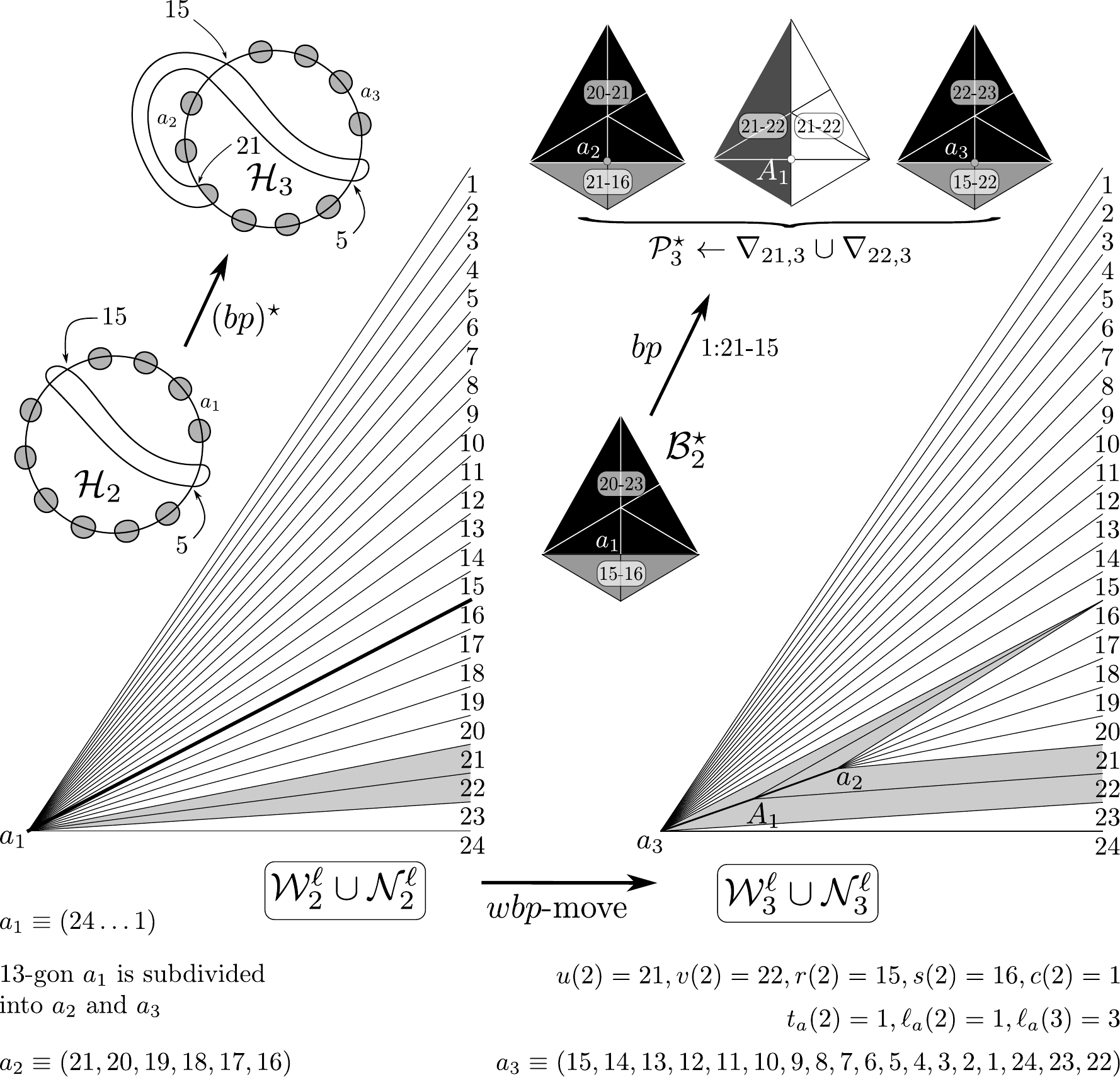} \\
\caption{\sf 
$\mathcal{H}^\star_{3} \leftarrow \mathcal{H}^\star_2 
\cup (\mathcal{P}_{3}^\star \backslash \mathcal{B}_2^\star)$. 
Pillow $\mathcal{P}_{3}^\star \leftarrow 
\nabla_{21,12}\cup \nabla_{22,12}$
($r^{24}_5$-example).}
\label{fig:winglist02}
\end{center}
\end{figure}

\begin{figure}
\begin{center}
\includegraphics[width=15cm]{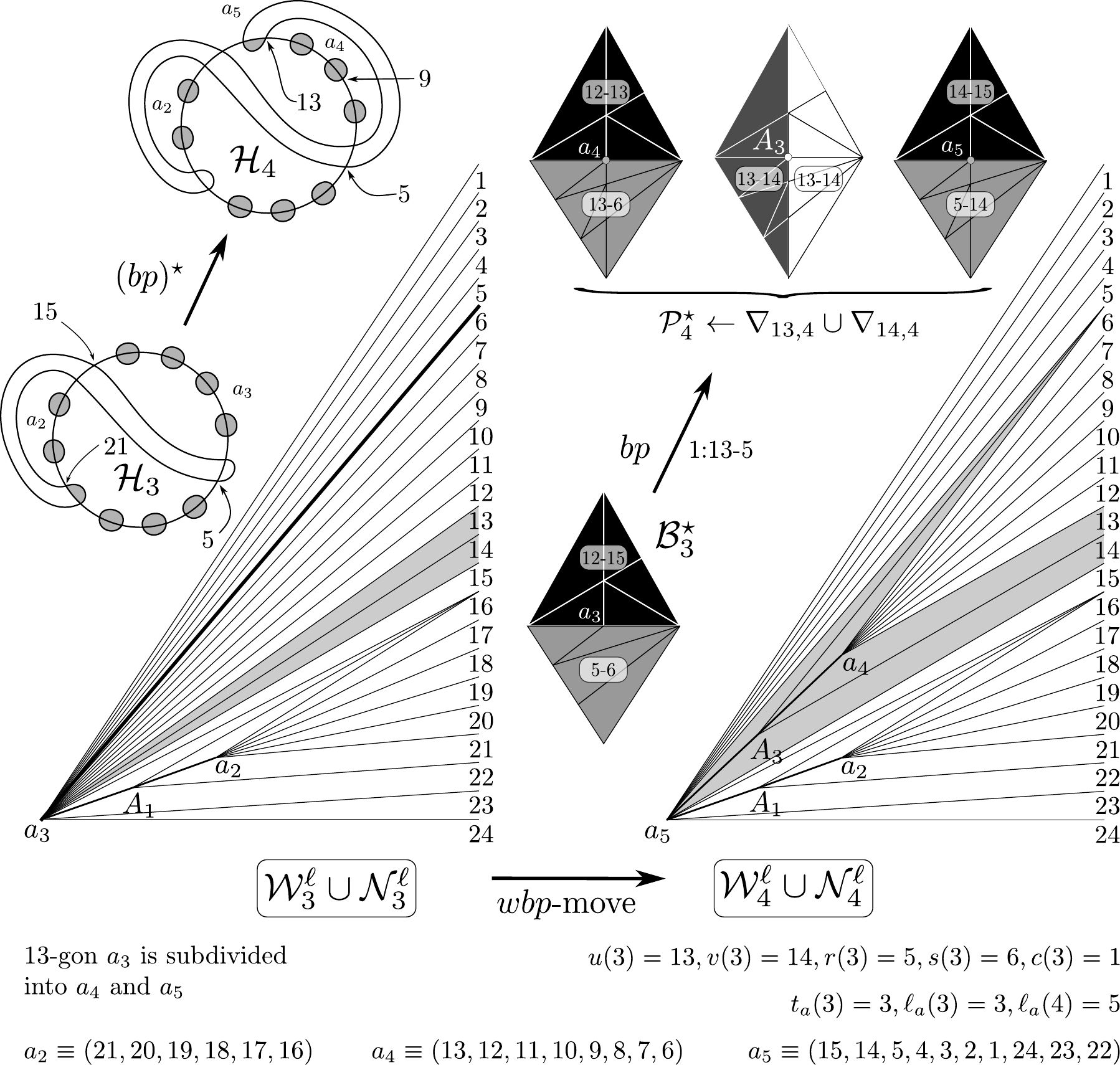} \\
\caption{\sf 
$\mathcal{H}^\star_{4} \leftarrow \mathcal{H}^\star_3 
\cup (\mathcal{P}_{4}^\star \backslash \mathcal{B}_3^\star)$. 
Pillow $\mathcal{P}_{4}^\star \leftarrow 
\nabla_{13,12}\cup \nabla_{14,12}$
($r^{24}_5$-example).}
\label{fig:winglist03}
\end{center}
\end{figure}

\begin{figure}
\begin{center}
\includegraphics[width=15cm]{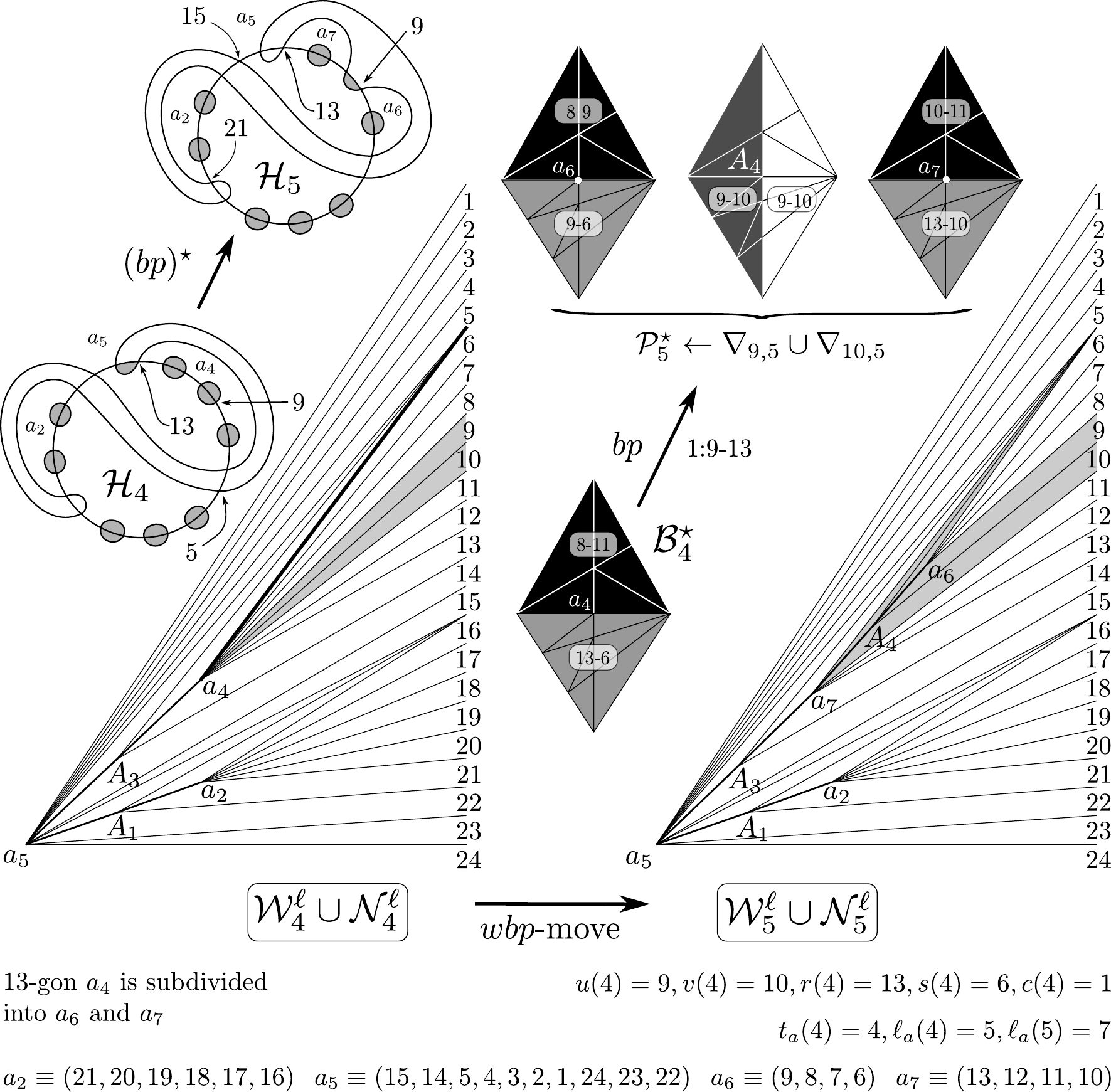} \\
\caption{\sf 
$\mathcal{H}^\star_{5} \leftarrow \mathcal{H}^\star_4 
\cup (\mathcal{P}_{5}^\star \backslash \mathcal{B}_4^\star)$. 
Pillow $\mathcal{P}_{5}^\star \leftarrow 
\nabla_{9,12}\cup \nabla_{10,12}$
($r^{24}_5$-example).}
\label{fig:winglist04}
\end{center}
\end{figure}

\begin{figure}
\begin{center}
\includegraphics[width=15cm]{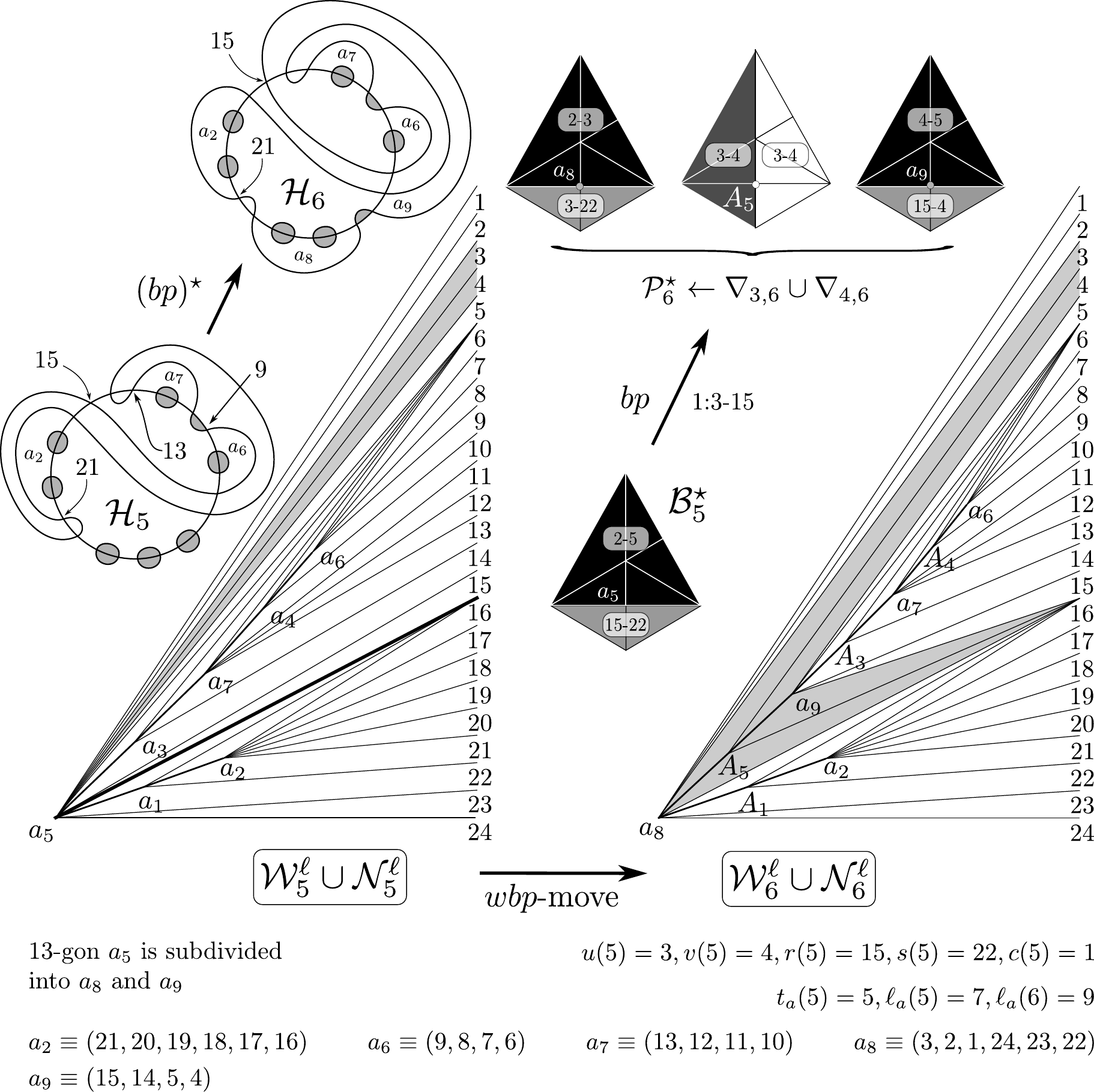} \\
\caption{\sf 
$\mathcal{H}^\star_{6} \leftarrow \mathcal{H}^\star_5 
\cup (\mathcal{P}_{6}^\star \backslash \mathcal{B}_5^\star)$. 
Pillow $\mathcal{P}_{6}^\star \leftarrow 
\nabla_{3,12}\cup \nabla_{4,12}$
($r^{24}_5$-example).}
\label{fig:winglist05}
\end{center}
\end{figure}

\begin{figure}
\begin{center}
\includegraphics[width=15cm]{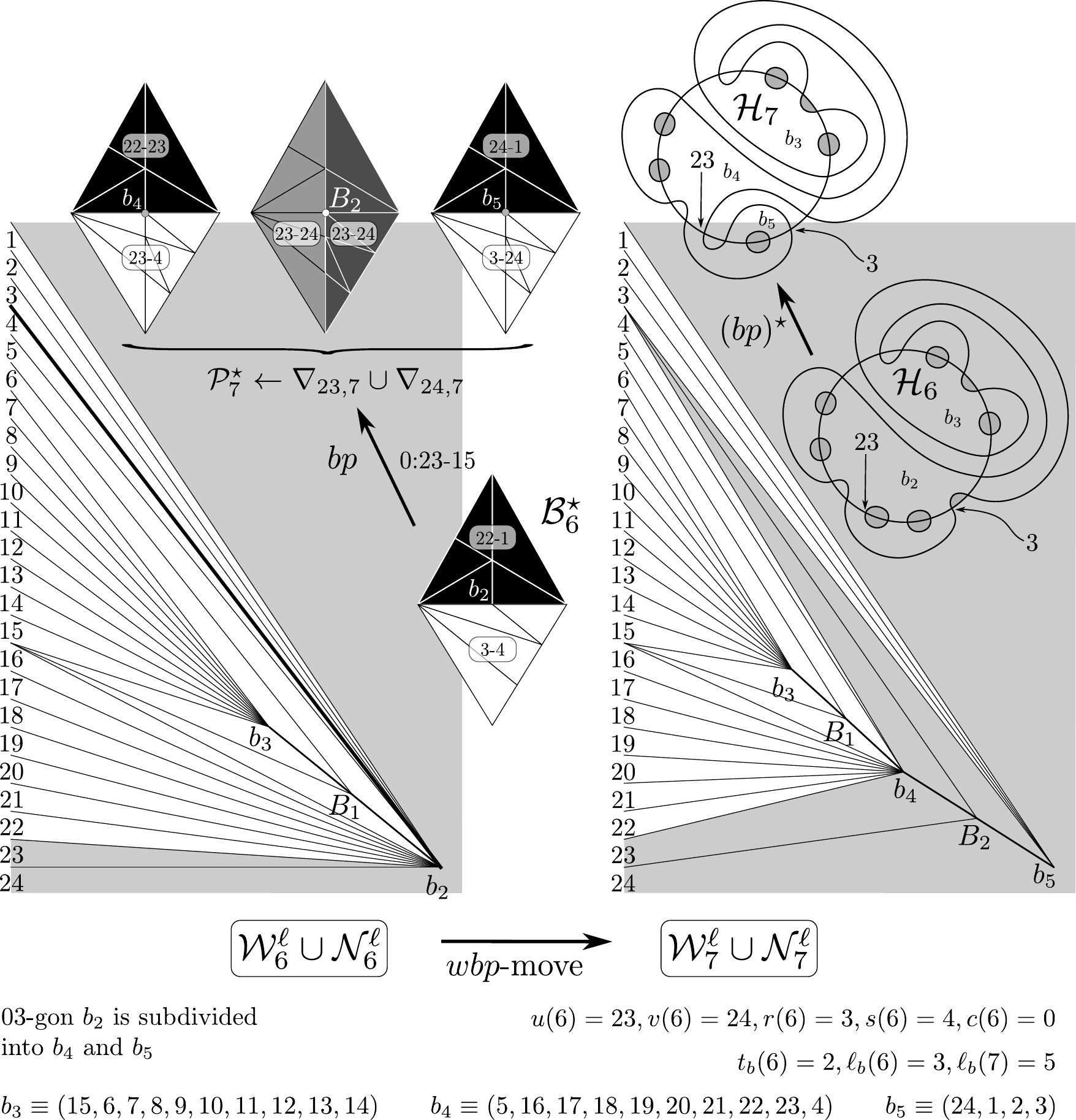} \\
\caption{\sf 
$\mathcal{H}^\star_{7} \leftarrow \mathcal{H}^\star_6 
\cup (\mathcal{P}_{7}^\star \backslash \mathcal{B}_6^\star)$. 
Pillow $\mathcal{P}_{7}^\star \leftarrow 
\nabla_{23,12}\cup \nabla_{24,12}$
($r^{24}_5$-example).}
\label{fig:winglist06}
\end{center}
\end{figure}

\begin{figure}
\begin{center}
\includegraphics[width=15cm]{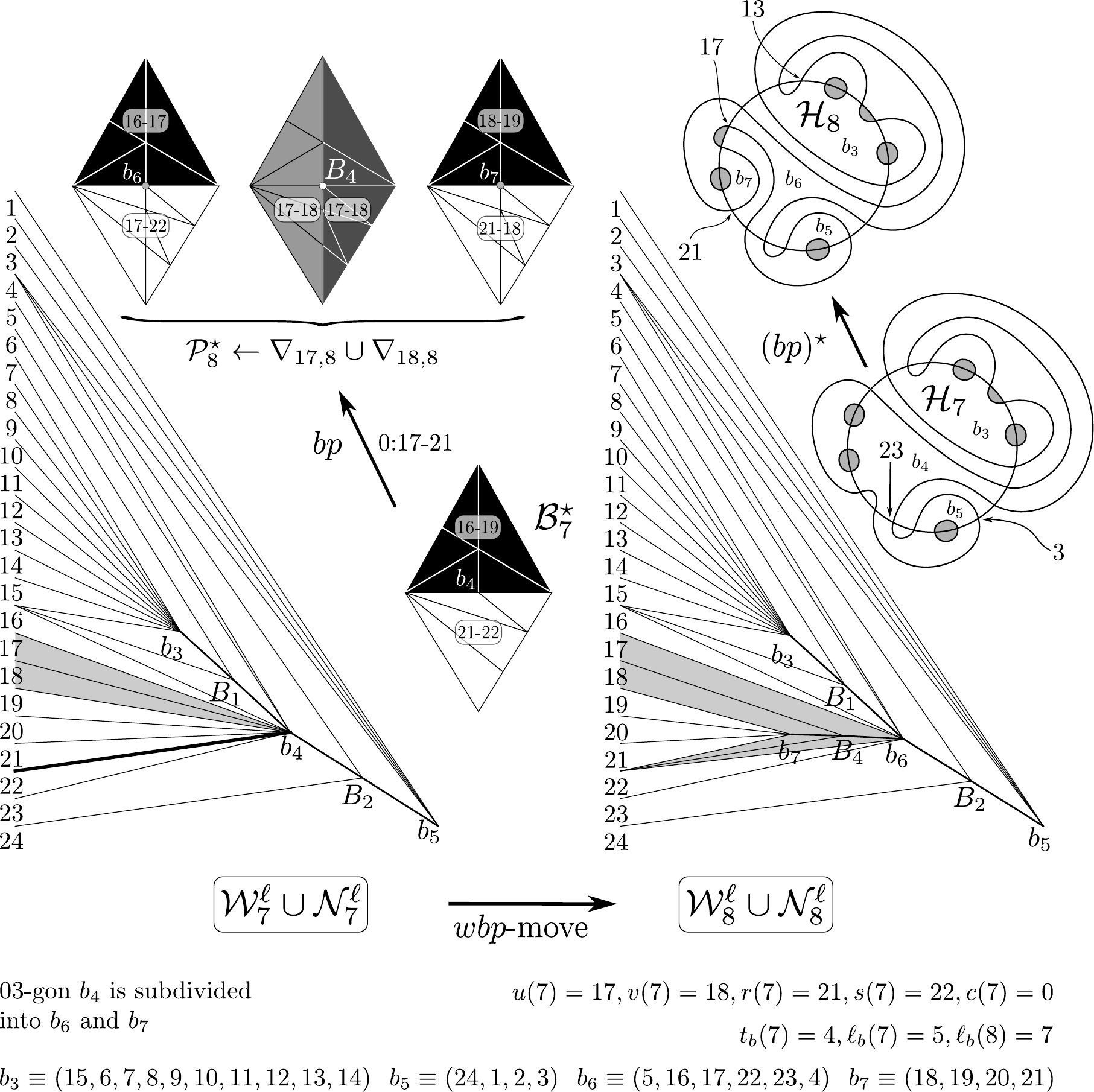} \\
\caption{\sf 
$\mathcal{H}^\star_{8} \leftarrow \mathcal{H}^\star_7 
\cup (\mathcal{P}_{8}^\star \backslash \mathcal{B}_7^\star)$. 
Pillow $\mathcal{P}_{8}^\star \leftarrow 
\nabla_{17,12}\cup \nabla_{18,12}$
($r^{24}_5$-example).}
\label{fig:winglist07}
\end{center}
\end{figure}

\begin{figure}
\begin{center}
\includegraphics[width=15cm]{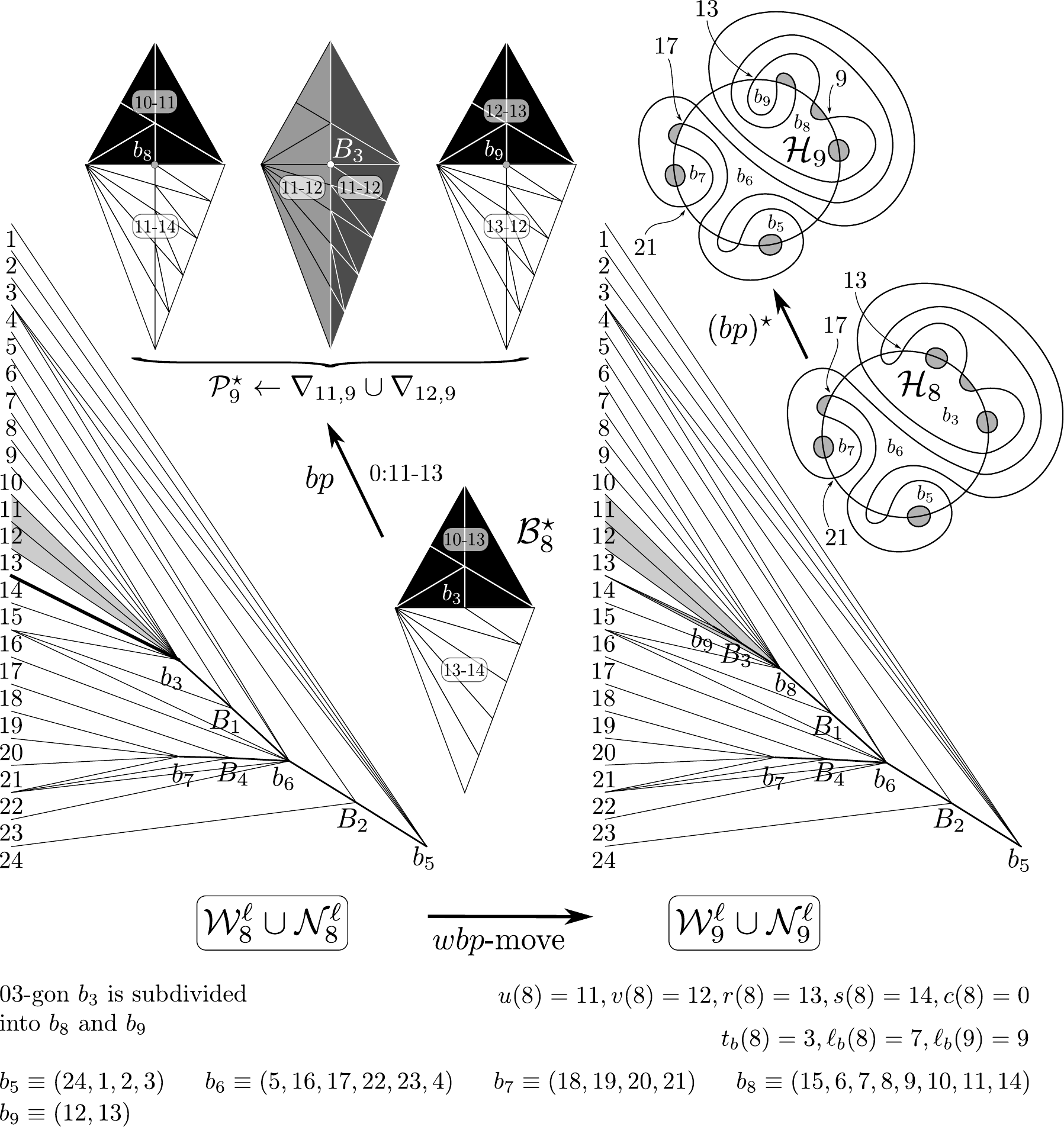} \\
\caption{\sf 
$\mathcal{H}^\star_{9} \leftarrow \mathcal{H}^\star_8 
\cup (\mathcal{P}_{9}^\star \backslash \mathcal{B}_8^\star)$. 
Pillow $\mathcal{P}_{9}^\star \leftarrow 
\nabla_{11,12}\cup \nabla_{12,12}$
($r^{24}_5$-example).}
\label{fig:winglist08}
\end{center}
\end{figure}

\begin{figure}
\begin{center}
\includegraphics[width=15cm]{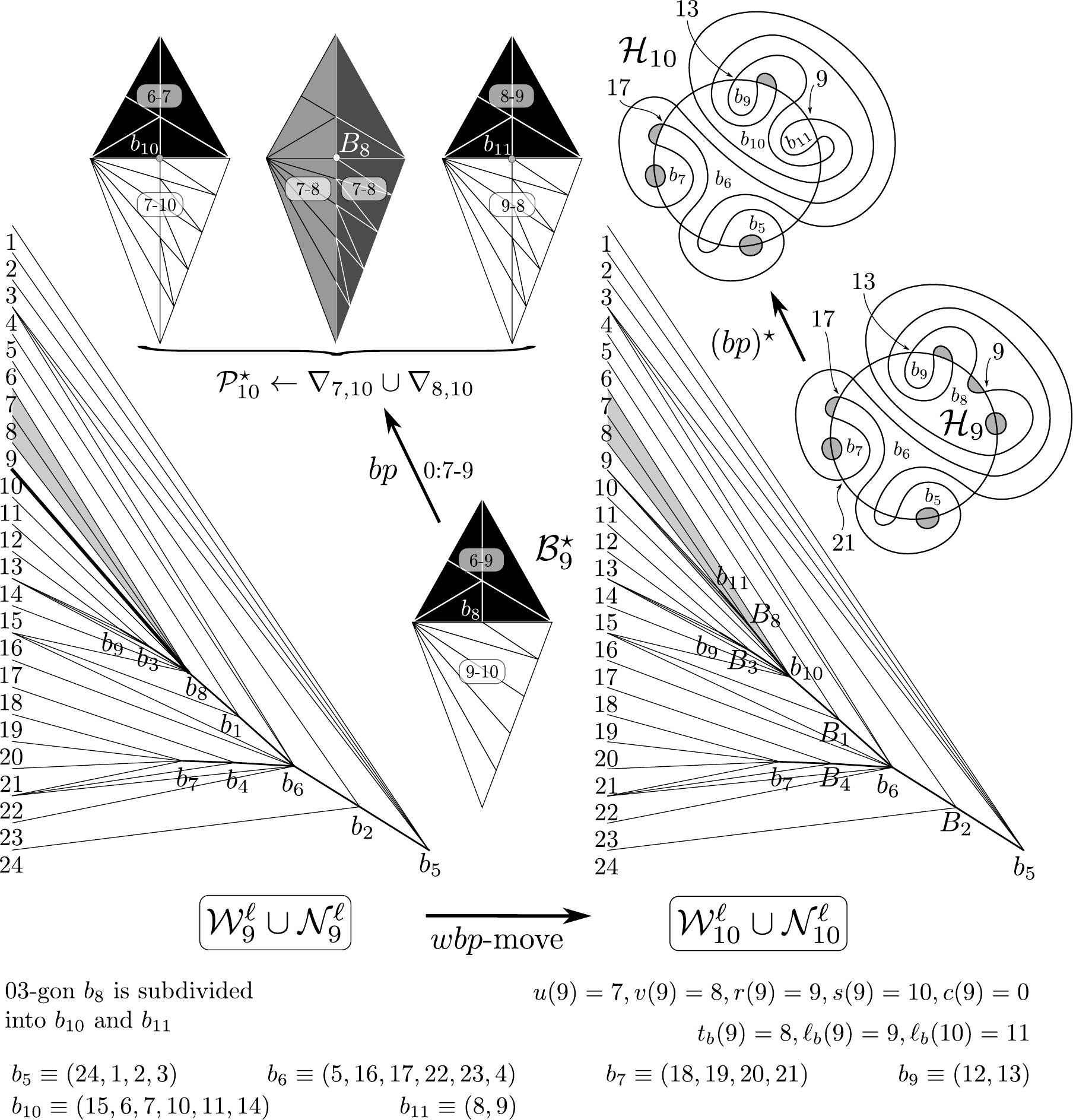} \\
\caption{\sf 
$\mathcal{H}^\star_{10} \leftarrow \mathcal{H}^\star_9 
\cup (\mathcal{P}_{10}^\star \backslash \mathcal{B}_9^\star)$. 
Pillow $\mathcal{P}_{10}^\star \leftarrow 
\nabla_{7,12}\cup \nabla_{8,12}$
($r^{24}_5$-example).}
\label{fig:winglist09}
\end{center}
\end{figure}

\begin{figure}
\begin{center}
\includegraphics[width=15cm]{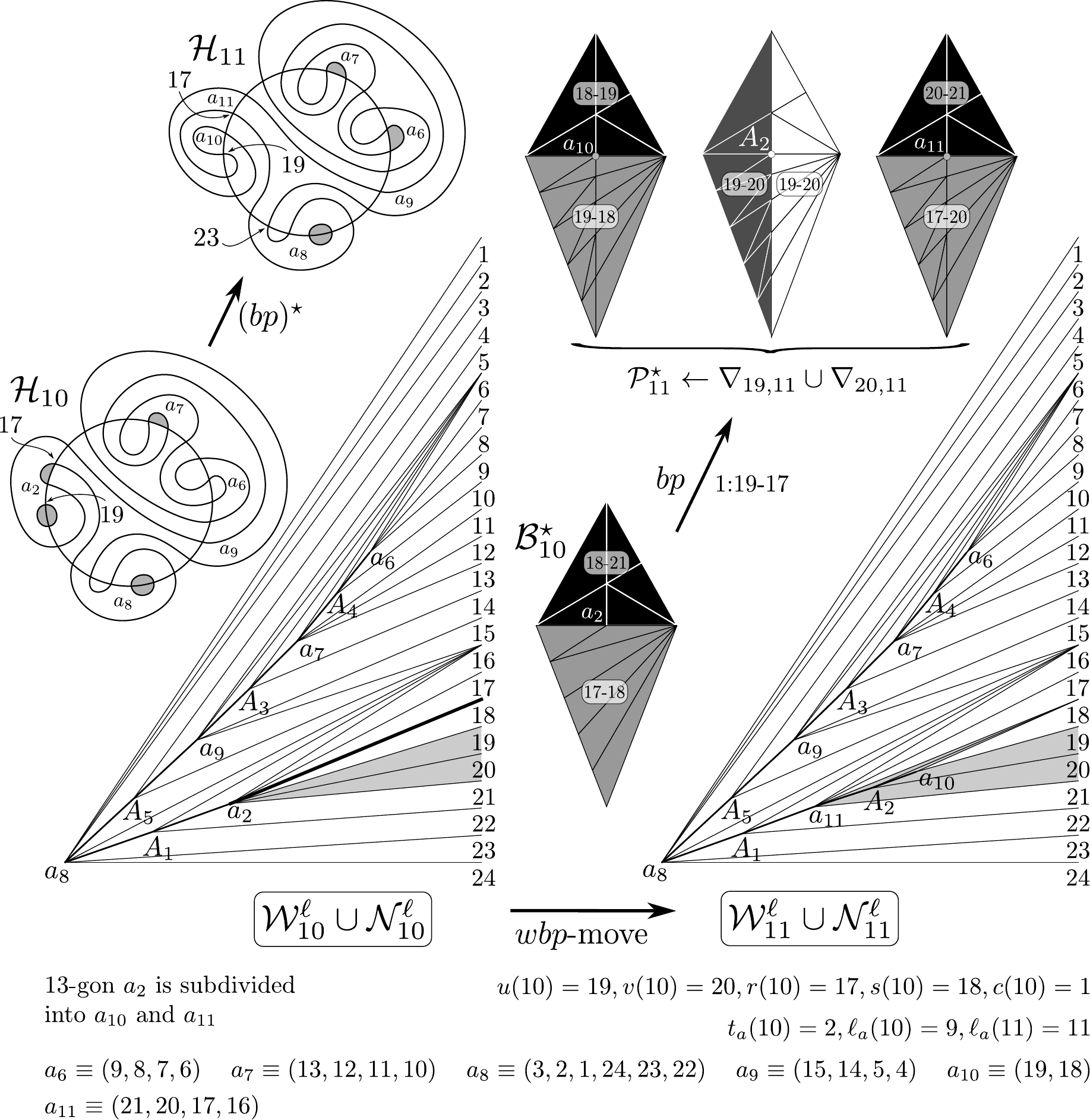} \\
\caption{\sf 
$\mathcal{H}^\star_{11} \leftarrow \mathcal{H}^\star_{10}
\cup (\mathcal{P}_{11}^\star \backslash \mathcal{B}_{10}^\star)$.
Pillow $\mathcal{P}_{11}^\star \leftarrow 
\nabla_{19,12}\cup \nabla_{20,12}$
($r^{24}_5$-example).}
\label{fig:winglist10}
\end{center}
\end{figure}

\begin{figure}
\begin{center}
\includegraphics[width=15cm]{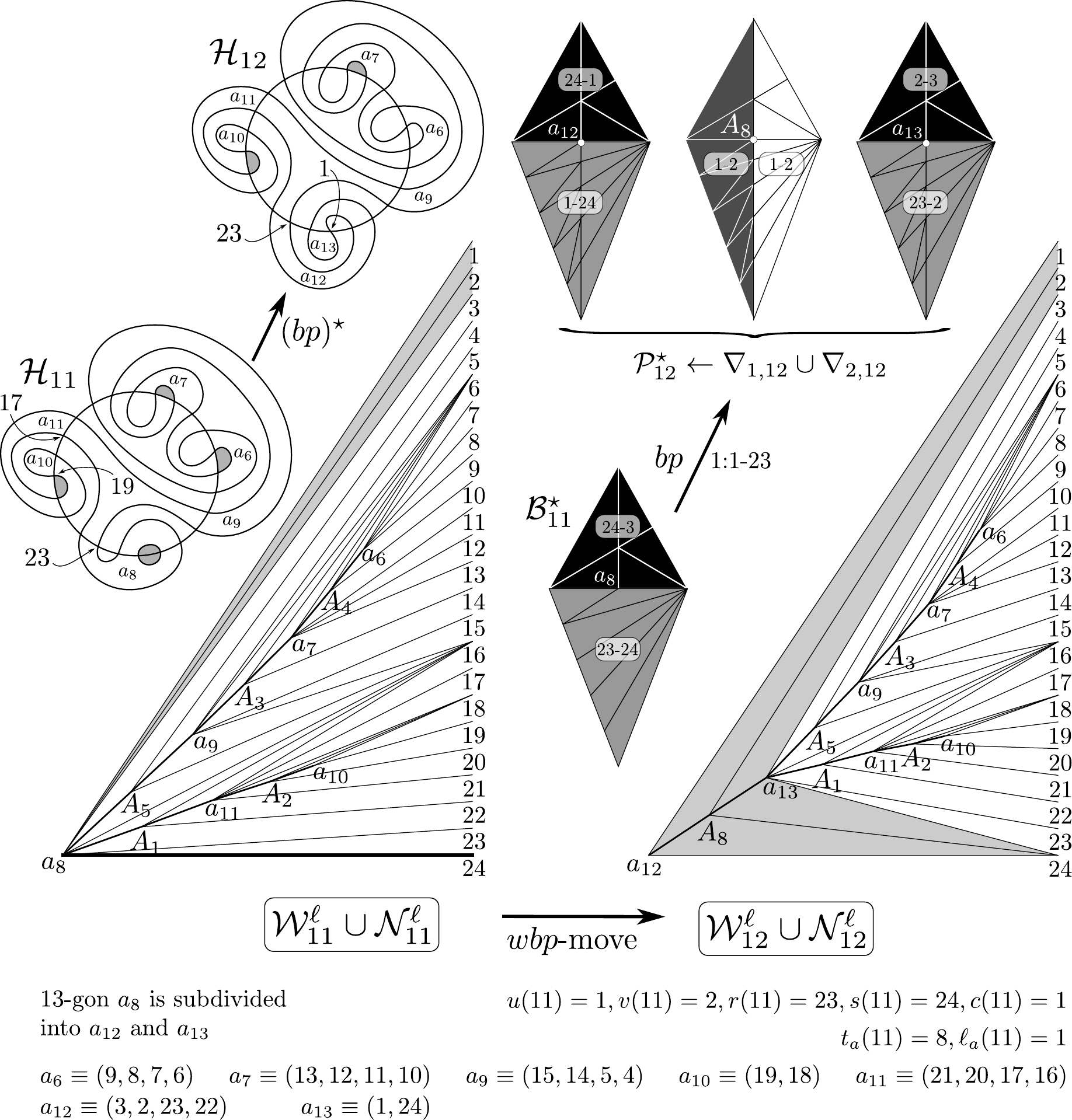} \\
\caption{\sf 
$\mathcal{H}^\star_{12} \leftarrow \mathcal{H}^\star_{11} 
\cup (\mathcal{P}_{12}^\star \backslash \mathcal{B}_{11}^\star)$. 
Pillow $\mathcal{P}_{12}^\star \leftarrow \nabla_{1,12}\cup \nabla_{2,12}$
($r^{24}_5$-example).}
\label{fig:winglist11}
\end{center}
\end{figure}


\bibliographystyle{plain}
\bibliography{bibtexIndex.bib}

\vspace{10mm}
\begin{center}

\hspace{7mm}
\begin{tabular}{l}
   S\'ostenes L. Lins\\
   Centro de Inform\'atica, UFPE \\
   Recife--PE \\
   Brazil\\
   sostenes@cin.ufpe.br
\end{tabular}
\hspace{20mm}
\begin{tabular}{l}
   Ricardo N. Machado\\
   Núcleo de Formação de Docentes, UFPE\\
   Caruaru--PE \\
   Brazil\\
   ricardonmachado@gmail.com
\end{tabular}

\end{center}

\end{document}